\documentclass[10pt]{amsart}
\usepackage{amsmath,amssymb,xypic,array}
\usepackage[T1]{fontenc}
\usepackage{amsfonts,amsthm,epsfig}
\usepackage{setspace}
\usepackage{graphicx}
\usepackage{tikz}
\usepackage{booktabs}
\usepackage{longtable}
\usepackage{mathtools}
\usetikzlibrary{trees}
\usepackage{geometry}
\usepackage{tikz-cd}
\usepackage[english]{babel}
\usepackage[latin1]{inputenc}
\usepackage{color}
\usepackage{mathrsfs}
\usepackage{latexsym}
\usepackage{hyperref}
\usepackage{textcomp}

\newtheorem{theorem}{Theorem}
\newtheorem{lemma}[theorem]{Lemma}
\newtheorem{proposition}[theorem]{Proposition}
\newtheorem{definition}[theorem]{Definition}
\newtheorem{corollary}[theorem]{Corollary}
\newtheorem{remark}[theorem]{Remark}

\setcounter{page}{1}








\DeclareMathOperator{\rk}{{rk}}

\DeclareMathOperator{\coker}{{coker}}

\DeclareMathOperator{\Hom}{Hom}
\DeclareMathOperator{\Ext}{Ext}

\title[On rank 3 instanton bundles on $\mathbb{P}^3$]
{On rank 3 instanton bundles on $\mathbb{P}^3$}

\author[A. V. Andrade]{A. V. Andrade}
\thanks{ }
\dedicatory{}
\address{Instituto de Ci\^encias Exatas\\ Departamento de Matem\'atica\\ Universidade Federal de Minas Gerais\\ Belo Horizonte, MG, 30161-970, Brazil}
\email{andradealine@mat.ufmg.br}

\author[ D. R. Santiago  ]{ D. R. Santiago }
\thanks{ }
\dedicatory{}
\address{Instituto Federal de Sergipe - Campus Gl\'oria \\ Coordena\c c\~ao de Agropecu\'aria \\
Povoado Piabas, Zona Rural S/N \\ 49680-000 Nossa Senhora da Gl\'oria-SE, Brazil}
\email{danilo.santiago@ifs.edu.br}

\author[ D. D. Silva  ]{ D. D. Silva }
\thanks{ }
\dedicatory{}
\address{Departamento de Matem\'atica - UFS \\ Avenida Marechal Rondon S/N \\ S\~ao Cristov\~ao-SE, Brazil}
\email{ddsilva@ufs.br}

\author[  L. C. S. Sobral]{L. C. S. Sobral}
\thanks{ }
\dedicatory{}
\address{Instituto de Matem\'atica, Estat\'istica e Computa\c c\~ao Cient\'ifica - UNICAMP \\ Departamento de Matem\'atica \\
Rua S\'ergio  Buarque de Holanda, 651\\ 13083-970 Campinas-SP, Brazil}
\email{luisufspaiandre@gmail.com}

\keywords{}
\subjclass{}
\date{}

\begin{document}

\begin{abstract}
We investigate rank $3$ instanton vector bundles on $\mathbb{P}^3$ of charge $n$ and its correspondence with rational curves of degree $n+3$. For $n=2$ we present a correspondence between stable rank $3$ instanton bundles and stable rank $2$ reflexive linear sheaves of Chern classes $(c_1,c_2,c_3)=(-1,3,3)$ and we use this correspondence to compute the dimension of the family of stable rank $3$ instanton bundles of charge $2$. Finally, we use the results above to prove that the moduli space of rank $3$ instanton bundles on $\mathbb{P}^3$ of charge $2$ coincides with the moduli space of rank $3$ stable locally free sheaves on $\mathbb{P}^3$ of Chern classes $(c_1,c_2,c_3)=(0,2,0)$.  This moduli space is irreducible, has dimension 16 and its generic point corresponds to a \textcolor{black}{generalized} t`Hooft instanton bundle.
\medskip
		
		\noindent
		\textbf{Keywords:} Instanton bundles, Moduli spaces, Hartshorne-Serre correspondence.
		
		\medskip
		
		\noindent
		\textbf{Mathematics Subject Classification 2020:} 14F06; 14J60; 32L10.
\end{abstract}

\maketitle

\section{Introduction}

Instanton bundles have attracted a great deal of attention from the mathematical community since the 1970's not only because of the connection that they provide with physics (see \cite{AHS1977} and \cite{AW1977}) as well for their applications to the study of moduli spaces of stable vector bundles on projective varieties. Although the family of  instanton bundles \textcolor{black}{has been know} for quite some time and besides the fact that there is a vast literature on the subject (see \cite{ADHM1978}, \cite{CTT2003}, \cite{J2006}, \cite{OSS1980}, \cite{P1977} to mention a few), it was only in the 2010's that it was proved that \textcolor{black}{the moduli space of stable rank 2 vector instanton bundles of charge $n$ is a smooth (see \cite{JV2014}) and irreducible (see \cite{T2012} and \cite{T2013}) open subset of the moduli space of stable rank 2 bundles with Chern classes $c_1=0$ and $c_2=n$ on $\mathbb{P}^3$}. Since then, several authors pursued what they considered to be \textcolor{black}{a natural direction for the study of instantons}: some studied their definition for other projective varieties ( see \cite{AM2022}, \cite{CG2020}, \cite{F2014} \cite{MMPL2020}); some considered their \textcolor{black}{degenerations} in $\mathbb{P}^3$ (see \cite{GJ2016}, \cite{JMT2018}); and others considered the higher rank case (\cite{AMMR2020}, \cite{BMT2012}, \cite{JMW2016}).

In this work, we will focus on the study of rank $3$ instanton bundles on $\mathbb{P}^3$ hoping that this family of bundles \textcolor{black}{provides} a good candidate - as it was for the rank $2$ case (see \cite{CG2020}, \cite{T2012}, \cite{T2013}) - for the study of irreducible components of the moduli space of stable rank $3$ bundles on $\mathbb{P}^3$. 

We start this investigation exploiting a connection between $\mathcal{I}(n)$, the family of rank $3$ instanton bundles of charge $n \geq 2 $, and rational curves of degree $n+3$. We use this correspondence, in Lemma \ref{lem01},  to concretely obtain, for $n \geq 2$, a family of rank $3$ instanton bundles $F$ for which we have $h^0(F(1)) \geq 2$ and $\Ext^2(F,F)=0$ generically. In other words, the stable rank $3$ generalized 't Hooft bundle of charge $n$ (see Definition \ref{hooft}) is a smooth point in $\mathcal{B}(0,n,0)$, the moduli space of stable rank $3$ vector bundles with Chern classes $(c_1,c_2,c_3)=(0,n,0)$.

Specializing to the case $n=2$, we prove (Theorem \ref{correspondence1}) that there exists \textcolor{black}{a} one-to-one correspondence between:
\begin{itemize}
\item[1.] Pairs $(F, \sigma)$, where $F$ is a rank $3$ stable instanton bundle of charge $2$ on $\mathbb{P}^3$ and $\sigma$ is a global section of $F(1)$. 

\item[2.] Pairs $(E,\xi)$ where $E$ is normalized rank $2$ stable reflexive sheaf on $\mathbb{P}^3$ with Chern classes $(-1,3,3)$ and $\xi \in \Ext^{1}(E(2),\mathcal{O}_{\mathbb{P}^{3}})$.
\end{itemize} 

This correspondence is crucial to find the dimension of $\mathcal{I}(2)$ using an earlier result regarding moduli spaces of rank $2$ reflexive sheaves on $\mathbb {P}^3$  by Chang \cite{C1984}. Moreover, we prove that $\mathcal{I}(2)$ and $\mathcal{B}(0,2,0)$ coincides. Actually, we have our main result:\\

\textcolor{black}{{\bf Main Theorem.}}
$\mathcal{B}(0,2,0)$ is irreducible of dimension $16$ and its generic point corresponds to a \textcolor{black}{generalized} t`Hooft instanton bundle.\\

This paper is organized as follows. In Section \ref{preliminaries} we revise some definitions and key facts about the Hartshorne-Serre correspondence, instanton sheaves on $\mathbb{P}^3$ and the concept of spectrum of a torsion free sheaf. Also in \textcolor{black}{Section \ref{preliminaries}}, we construct explicitly rank $3$ instanton bundles of charge $n \geq 2 $ from rational curves of degree $n+3$. Following the ideas of Hartshorne in \cite{H1978}, we then exploit, in Section \ref{invariants}, the connection between rank $3$ bundles and curves to relate the numerical invariants of the bundle to the invariants of the curve. In Section \ref{correspondence} we prove that the generic stable rank $3$ generalized 't Hooft bundle of charge $n$ is a smooth point in the moduli space of rank $3$ vector bundles; we also present a correspondence between stable rank $3$ instanton bundles on $\mathbb{P}^3$ of charge $2$ and rank $2$ stable reflexive linear sheaves on $\mathbb{P}^3$ of Chern classes $(-1,3,3)$. Finally, in Section \ref{moduli}, we apply the results found in Sections \ref{invariants} and \ref{correspondence} to prove that $\mathcal{B}(0,2,0)$ is irreducible of dimension 16 and its generic point is a \textcolor{black}{generalized} t`Hooft instanton bundle.

\subsection*{Notation and conventions} In  this work, we will not make any distinction between vector bundles and locally free sheaves. If $F$ is a sheaf on $\mathbb{P}^n$, then $h^i(F)$ denotes the dimension of $H^i(F)$, \textcolor{black}{the $i^{th}$ cohomology group}, and $F^{*}$ denotes the dual of $F$, i. e., $F^{\ast} =\mathcal{H}om(F,\mathcal{O}_{\mathbb{P}^n})$. \textcolor{black}{Unless stated otherwise a sheaf is said to be (semi)stable if it is Gieseker (semi)stable.}

\subsection*{Acknowledgments}
The authors would like to thank both referees for the thorough reading of our work and most of all for the suggestions that helped us to improve it. We also would like to express our gratitude to Marcos Jardim for his invaluable support during the realization of this project.
 The first author was partially supported by CAPES-COFECUB project number 88881.191920/2018-01 and FAPEMIG Universal Grant APQ-01655-22. The second named author was supported by the CNPQ grant number 141174/2019-6. The fourth named author was supported by the CNPQ grant number 140727/2016-7 during his Phd whose thesis inspired part of this paper.

\section{Hartshorne-Serre correspondence and instanton bundles}\label{preliminaries} Roughly speaking, the Hartshorne-Serre correspondence consists on associating subschemes $Y$ of codimension $2$ in a nonsingular algebraic variety $X$ to pairs $(F,\sigma)$, where $F$ is a vector bundle and $\sigma$ is a section of $F$. In \cite{H1980b}  Hartshorne proved that this correspondence works in case $X = {\mathbb{P}}^{3}$, $Y$ is a locally complete intersection curve in $X$ and $F$ is a rank $2$ vector bundle, while the general rank case, due to Arrondo, \textcolor{black}{can be found in \cite{A2007}}. One of the features of the Hartshorne-Serre correspondence is the possibility to express a rank $r$ vector bundle $F$ on $X$ by the following exact sequence 

$$0\rightarrow \mathcal{O}^{\oplus(r-1)}_{X}
 \rightarrow F \rightarrow I_{Y}(k)\rightarrow 0,$$

\noindent where $I_{Y}$ is the ideal sheaf of a codimension $2$ subscheme $Y$ in $X$ and $k$ is the first Chern class of $F$. So we see that it is possible to get some information from $F$ in terms of $Y$. In particular, it can be used in the case in which $X = {\mathbb{P}}^{3}$, $F$ is a rank $3$ vector bundle in $X$ and $Y$ is a locally complete intersection curve in $X$. 

\begin{theorem}\label{serre} Let $X$ be a smooth algebraic variety and $Y$ be a codimension $2$ locally complete intersection subscheme. Let $N_{Y/X}$ be the normal bundle of $Y$ in $X$ and $L$ be a line bundle in $X$ such that $H^{2}(X,L^{*})=0$. We assume that $\bigwedge^{2}N_{Y/X} \otimes {L}^{*}|_{Y}$ is generated by $r-1$ global sections $\sigma_{1}, \cdots, \sigma_{r-1}$. Then there exists a rank $r$ vector bundle $F$ on $X$ such that:
\begin{itemize}
\item[i.] $\bigwedge^{r}F = L$

\item[ii.] $F$ has $r-1$ global sections $t_1, \cdots, t_{r-1}$ whose dependency locus is $Y$ and:

$$\sigma_{1}t_{1}+ \cdots +\sigma_{r-1}t_{r-1}=0.$$
\end{itemize}
\noindent If $H^{1}(X,L^{*}) = 0$ then $F$ is unique up to an isomorphism.
\end{theorem}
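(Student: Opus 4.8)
The plan is to exhibit $F$ as the middle term of an extension of $I_Y \otimes L$ by a trivial bundle of rank $r-1$, using the cohomological hypotheses on $L^*$ to control which extensions occur via the local-to-global spectral sequence.

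First I would compute the sheaf-Ext of $I_Y$. Applying $\mathcal{H}om(-,\mathcal{O}_X)$ to $0 \to I_Y \to \mathcal{O}_X \to \mathcal{O}_Y \to 0$ and using that $Y$ is codimension $2$ and locally a complete intersection (so $\mathcal{O}_Y$ is resolved locally by a Koszul complex of length $2$), one obtains $\mathcal{H}om(I_Y,\mathcal{O}_X)=\mathcal{O}_X$, the fundamental local isomorphism $\mathcal{E}xt^1(I_Y,\mathcal{O}_X)\cong \mathcal{E}xt^2(\mathcal{O}_Y,\mathcal{O}_X)\cong \bigwedge^2 N_{Y/X}$, and $\mathcal{E}xt^{\ge 2}(I_Y,\mathcal{O}_X)=0$. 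Twisting by $L^*$ and inserting this into the five-term exact sequence attached to $\mathcal{E}xt^\bullet(I_Y\otimes L,\mathcal{O}_X)$ gives
$$0 \to H^1(X,L^*) \to \Ext^1(I_Y\otimes L,\mathcal{O}_X) \to H^0\big(Y,\, \bigwedge^2 N_{Y/X}\otimes L^*|_Y\big) \to H^2(X,L^*).$$

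Since $H^2(X,L^*)=0$, the evaluation-type map on the right is surjective, so each $\sigma_i$ lifts to a class $e_i\in \Ext^1(I_Y\otimes L,\mathcal{O}_X)$; assembling $(e_1,\dots,e_{r-1})$ into a class in $\Ext^1(I_Y\otimes L,\mathcal{O}_X^{\oplus(r-1)})$ produces an extension
$$0 \to \mathcal{O}_X^{\oplus(r-1)} \to F \to I_Y\otimes L \to 0.$$
Taking determinants and using $(I_Y)^{\vee\vee}=\mathcal{O}_X$ (as $\codim Y \ge 2$) gives $\bigwedge^r F = L$; the tautological sections $t_1,\dots,t_{r-1}$ induced by $\mathcal{O}_X^{\oplus(r-1)}\hookrightarrow F$ have dependency locus $Y$ (the locus where $I_Y\otimes L$ fails to be locally free), and the relation $\sum_i \sigma_i t_i=0$ is the compatibility encoded in the extension data.

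The hard part will be local freeness of $F$, and this is exactly where global generation of $\bigwedge^2 N_{Y/X}\otimes L^*|_Y$ by the $\sigma_i$ is needed. I would dualize the extension: in the resulting long exact sequence of sheaf-Ext the connecting map $\delta:\mathcal{O}_X^{\oplus(r-1)} \to \mathcal{E}xt^1(I_Y\otimes L,\mathcal{O}_X)=\bigwedge^2 N_{Y/X}\otimes L^*|_Y$ sends the $i$-th generator to $\sigma_i$, so the generation hypothesis makes $\delta$ surjective and hence $\mathcal{E}xt^1(F,\mathcal{O}_X)=0$. Together with $\mathcal{E}xt^{\ge 2}(I_Y\otimes L,\mathcal{O}_X)=0$, which forces $\mathcal{E}xt^{\ge 2}(F,\mathcal{O}_X)=0$ in the same sequence, this yields $\mathcal{E}xt^i(F,\mathcal{O}_X)=0$ for all $i>0$, so $F$ is locally free on the smooth variety $X$. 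For uniqueness I would use $H^1(X,L^*)=0$: by the five-term sequence the map to $H^0(Y,\bigwedge^2 N_{Y/X}\otimes L^*|_Y)$ becomes an isomorphism, so the extension class, and thus $F$, is determined by $(\sigma_1,\dots,\sigma_{r-1})$ up to isomorphism.
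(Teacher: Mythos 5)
Your proposal is correct, and it is essentially the standard extension-theoretic argument behind this statement: the paper itself gives no proof beyond citing Arrondo \cite[Theorem 1.1]{A2007}, and your construction (lifting the $\sigma_i$ through the local-to-global five-term sequence using $H^2(X,L^*)=0$, deducing local freeness of $F$ from surjectivity of the connecting map $\mathcal{O}_X^{\oplus(r-1)}\to\bigwedge^2 N_{Y/X}\otimes L^*|_Y$ granted by global generation, and getting uniqueness from $H^1(X,L^*)=0$) is precisely the route taken in that reference, generalizing Hartshorne's rank $2$ case. The only points left implicit are routine: the scheme-theoretic identification of the dependency locus with $Y$ and the precise meaning/verification of the relation $\sigma_1t_1+\cdots+\sigma_{r-1}t_{r-1}=0$, both of which follow from the same local computation of the extension data.
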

\begin{proof} \textcolor{black}{See \cite[Theorem 1.1]{A2007}. }\end{proof}

In particular, if $Y$ is a locally complete intersection curve in $\mathbb{P}^3$ such that ${\rm det}N_{Y/\mathbb{P}^3}\otimes{\mathcal{O}_Y(-k)}$ is generated by two global sections then the corresponding rank $3$ vector bundle $F$ has two sections $s_1,s_2$ which define the following short exact sequence

$$\xymatrix{0 \ar[r]& \mathcal{O}_{\mathbb{P}^{3}}^{\oplus 2} \ar[r]^-{(s_1,s_2)}& F \ar[r] &I_{Y}(k)\ar[r]& 0},$$

\noindent so $Y$ is precisely the degeneration locus of $s_1,s_2$. Throughout the rest of this paper we say $Y \subset \mathbb{P}^3$ satisfies the \textcolor{black}{Hartshorne-Serre Correspondence} when $Y$ satisfies the hypotheses of the Hartshorne-Serre Correspondence given by Theorem \ref{serre} for $r=3$.

\textcolor{black}{In the context of Theorem \ref{serre} we say that $F$ is the corresponding bundle of $Y$ via Hartshorne-Serre correspondence.} One of the main features of the \textcolor{black}{Hartshorne-Serre Correspondence} it is that one can provide explicit examples of rank $r$ vector bundles on $\mathbb{P}^3$, which is not a simple task depending on the family of bundles that we are considering (see for instance \cite{AMMR2020} and \cite{BMT2012}). \textcolor{black}{Recall that for the rank $2$ case, the Theorem \ref{serre} can also be extended for reflexive sheaves, as can be seen in \cite[Theorem 4.1]{H1980b}}.  In this work, we will be \textcolor{black}{interested in rank $3$ instanton} bundles on $\mathbb{P}^3$ and to ensure their existence we relate them with rational curves. First, recall the definition of instanton sheaves introduced by Jardim in \cite{J2006}.

\begin{definition} An {\bf instanton sheaf} $F$ on $\mathbb{P}^{3}$ is a torsion-free coherent sheaf satisfying the following cohomology vanishing conditions
$$H^{0}(F(-1))= H^{1}(F(-2)) = H^{2}(F(-2))= H^{3}(F(-3))= 0$$ 

\noindent and such that its first Chern class is 0. Its second Chern class is called the {\bf charge} of $F$. 

\noindent If $F$ is locally free, we call it an {\bf instanton bundle}.
\end{definition}

\begin{remark}
More generally, a coherent sheaf on $\mathbb{P}^3$ which satisfies the cohomology vanishing conditions of the definition is called a {\bf linear sheaf} (see \cite[Proposition 2 and Theorem 3]{J2006}). 

\end{remark}

\begin{theorem}\label{ratinst} Consider a nondegenerate locally complete intersection \textcolor{black}{smooth} rational curve $Y \subset \mathbb{P}^{3}$ of degree $n+3$, $n \geq 2$. Then ${\rm det}N_{Y/\mathbb{P}^{3}} \otimes \mathcal{O}_Y(-3)$ is generated by two global sections and \textcolor{black}{$I_Y(2)$ is a linear sheaf. Furthermore,}  if $F(1)$ is the corresponding rank $3$ vector bundle then $F$ is an instanton bundle of charge $n$.
\end{theorem}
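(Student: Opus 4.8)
The plan is to construct the bundle via the Hartshorne--Serre correspondence (Theorem \ref{serre}) with $r=3$ and $L=\mathcal{O}_{\mathbb{P}^3}(3)$, and then to verify the instanton conditions on the resulting twisted sheaf. First I would settle the global generation statement, which is simultaneously the hypothesis needed to invoke Theorem \ref{serre}. Since $Y$ is smooth of genus $0$ and degree $n+3$, adjunction gives $\det N_{Y/\mathbb{P}^3}=\omega_Y\otimes\omega_{\mathbb{P}^3}^{-1}|_Y=\omega_Y(4)$; writing $Y\cong\mathbb{P}^1$ with $\mathcal{O}_Y(1)=\mathcal{O}_{\mathbb{P}^1}(n+3)$ and $\omega_Y=\mathcal{O}_{\mathbb{P}^1}(-2)$ yields $\det N_{Y/\mathbb{P}^3}\otimes\mathcal{O}_Y(-3)=\mathcal{O}_{\mathbb{P}^1}(n+1)$. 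As $n\ge 2$ this has degree $\ge 3>0$, hence is globally generated and in fact generated by two sections $\sigma_1,\sigma_2$. Since $H^2(\mathbb{P}^3,\mathcal{O}_{\mathbb{P}^3}(-3))=0$ (existence) and $H^1(\mathbb{P}^3,\mathcal{O}_{\mathbb{P}^3}(-3))=0$ (uniqueness), Theorem \ref{serre} produces a rank $3$ bundle, which I call $F(1)$, fitting in $0\to\mathcal{O}_{\mathbb{P}^3}^{\oplus 2}\to F(1)\to I_Y(3)\to 0$; twisting by $\mathcal{O}_{\mathbb{P}^3}(-1)$ gives the working sequence $0\to\mathcal{O}_{\mathbb{P}^3}(-1)^{\oplus 2}\to F\to I_Y(2)\to 0$.

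Next I would prove that $I_Y(2)$ is a linear sheaf by reading the four vanishings off the ideal sequence $0\to I_Y\to\mathcal{O}_{\mathbb{P}^3}\to\mathcal{O}_Y\to 0$. Nondegeneracy of $Y$ gives $H^0(I_Y(1))=0$; connectedness of $Y$ (so that $H^0(\mathcal{O}_Y)=\mathbb{C}$) forces the restriction $H^0(\mathcal{O}_{\mathbb{P}^3})\to H^0(\mathcal{O}_Y)$ to be an isomorphism, whence $H^1(I_Y)=0$; rationality gives $H^2(I_Y)\cong H^1(\mathcal{O}_Y)=0$; and $H^3(I_Y(-1))\cong H^3(\mathcal{O}_{\mathbb{P}^3}(-1))=0$. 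These are exactly the linear-sheaf conditions for $I_Y(2)$.

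I would then transfer the information to $F$. A Chern-character computation, using adjunction and Riemann--Roch to obtain $\mathrm{ch}(\mathcal{O}_Y)$ and twisting through the working sequence, gives $c_1(F)=0$, $c_2(F)=(n+3)-3=n$, and $c_3(F)=0$ (the last from $g=0$), so the charge equals $n$. For the cohomological conditions I twist $0\to\mathcal{O}_{\mathbb{P}^3}(-1)^{\oplus 2}\to F\to I_Y(2)\to 0$ and apply the previous paragraph: the vanishings $H^0(F(-1))=0$, $H^1(F(-2))=0$ and $H^2(F(-2))=0$ are immediate, since the relevant cohomology of $\mathcal{O}_{\mathbb{P}^3}(-2)$ and $\mathcal{O}_{\mathbb{P}^3}(-3)$ vanishes and the corresponding groups of $I_Y(1)$ and $I_Y$ were just shown to be zero.

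The hard part will be $H^3(F(-3))=0$, because here the flanking term $H^3(\mathbb{P}^3,\mathcal{O}_{\mathbb{P}^3}(-4))^{\oplus 2}=\mathbb{C}^2$ does not vanish, so the conclusion cannot be read off the sequence directly and must instead use the specific extension class. I would handle it by Serre duality, $H^3(F(-3))\cong\Hom(F(1),\mathcal{O}_{\mathbb{P}^3})^\ast$. Dualizing $0\to\mathcal{O}_{\mathbb{P}^3}^{\oplus 2}\to F(1)\to I_Y(3)\to 0$ and using $\homc(I_Y,\mathcal{O}_{\mathbb{P}^3})=\mathcal{O}_{\mathbb{P}^3}$ reduces the claim to the injectivity of the map $\mathbb{C}^2=\Hom(\mathcal{O}_{\mathbb{P}^3}^{\oplus 2},\mathcal{O}_{\mathbb{P}^3})\to\Ext^1(I_Y(3),\mathcal{O}_{\mathbb{P}^3})$ determined by the two extension classes of the sequence. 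Using the ideal sequence and the vanishing of $\Ext^{1,2}(\mathcal{O}_{\mathbb{P}^3}(3),\mathcal{O}_{\mathbb{P}^3})$, together with $\extc^{q}(\mathcal{O}_Y,\mathcal{O}_{\mathbb{P}^3})=0$ for $q<2$ and $\extc^{2}(\mathcal{O}_Y,\mathcal{O}_{\mathbb{P}^3})=\det N_{Y/\mathbb{P}^3}$, I would identify $\Ext^1(I_Y(3),\mathcal{O}_{\mathbb{P}^3})\cong\Ext^2(\mathcal{O}_Y,\mathcal{O}_{\mathbb{P}^3}(-3))\cong H^0(\det N_{Y/\mathbb{P}^3}\otimes\mathcal{O}_Y(-3))$, under which the two extension classes are precisely $\sigma_1,\sigma_2$. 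Since a positive-degree line bundle on $\mathbb{P}^1$ cannot be generated by proportional sections, $\sigma_1,\sigma_2$ are linearly independent, the map is injective, and hence $H^3(F(-3))=0$. Combining the charge computation with the four vanishings shows that $F$ is an instanton bundle of charge $n$.
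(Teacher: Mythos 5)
Your proposal is correct, and up to the last step it follows the paper's line: same use of Theorem \ref{serre} with $L=\mathcal{O}_{\mathbb{P}^3}(3)$, same ideal-sheaf cohomology for the linearity of $I_Y(2)$ (you get $H^1(I_Y)=0$ from connectedness rather than the paper's Euler-characteristic count, and you compute $\det N_{Y/\mathbb{P}^3}$ by adjunction instead of quoting the splitting $N_{Y/\mathbb{P}^3}=\mathcal{O}_Y((2n+5)pt)^{\oplus 2}$ from \cite{JMT2018} -- a harmless and in fact more robust variant, since only the determinant is needed), and the same easy vanishings for $H^0(F(-1))$, $H^1(F(-2))$, $H^2(F(-2))$. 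The genuine divergence is the key step $H^3(F(-3))=0$. The paper assumes a nonzero map $\sigma\colon F(1)\to\mathcal{O}_{\mathbb{P}^3}$ exists and runs a case analysis: if $\sigma$ is not surjective a snake-lemma diagram produces a rank $0$ subsheaf of $I_Y(3)$, a contradiction; if $\sigma$ is surjective, dualizing the resulting subextension yields an epimorphism $\mathcal{O}_{\mathbb{P}^3}\to\omega_Y(1)$, impossible because $\deg\omega_Y(1)=n+1>0$. You instead compute $\Hom(F(1),\mathcal{O}_{\mathbb{P}^3})$ directly as the kernel of the connecting map $\mathbb{C}^2\to\Ext^1(I_Y(3),\mathcal{O}_{\mathbb{P}^3})\cong H^0(\omega_Y(1))$, which sends $(a,b)$ to $a\sigma_1+b\sigma_2$, and observe that since $\sigma_1,\sigma_2$ generate the positive-degree line bundle $\omega_Y(1)$ they cannot be proportional, so the map is injective. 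Both arguments ultimately rest on the positivity of $\deg\omega_Y(1)$; yours is shorter, avoids the two diagram chases, and actually pins down $\Hom(F(1),\mathcal{O}_{\mathbb{P}^3})=0$ as a kernel rather than by contradiction. The one point to flag is that the identification of the connecting map with evaluation against $(\sigma_1,\sigma_2)$ is not part of the statement of Theorem \ref{serre} (which only records the relation $\sigma_1 t_1+\sigma_2 t_2=0$); it comes from how Arrondo's construction realizes $F(1)$ as the extension with class $(\sigma_1,\sigma_2)$ under $\Ext^1(I_Y(3),\mathcal{O}_{\mathbb{P}^3}^{\oplus 2})\cong H^0(\det N_{Y/\mathbb{P}^3}\otimes\mathcal{O}_Y(-3))^{\oplus 2}$, so you should either cite that construction explicitly or note that a nontrivial kernel would split off a trivial summand $F(1)\cong\mathcal{O}_{\mathbb{P}^3}\oplus G$ and then contradict local freeness of $G$ via the rank $2$ criterion of \cite[Theorem 4.1]{H1980b} -- essentially the paper's surjective case.
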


\begin{proof}

From the proof of \cite[Proposition 6.5]{JMT2018}, we have

$$N_{Y/\mathbb{P}^{3}} = \mathcal{O}_{Y}((2n+5)pt)^{\oplus 2},$$

\noindent so we get $\bigwedge^{2} N_{Y/\mathbb{P}^{3}} = O_{Y}((4n+10)pt)$ and hence $ \det N_{Y/\mathbb{P}^{3}} \otimes \mathcal{O}_Y(-3)= \mathcal{O}_Y((n+1)pt)$ which is generated by two global sections. By Hartshorne-Serre correspondence, we have

\begin{equation}\label{auxseq}
0\rightarrow \mathcal{O}_{\mathbb{P}^{3}}^{\oplus 2} \rightarrow F(1) \rightarrow I_{Y}(3) \rightarrow 0
\end{equation}
\noindent where $F(1)$ is the corresponding rank $3$ bundle.

Now we prove that $I_{Y}(2)$ is a linear sheaf. We have $h^{0}(I_{Y}(1)) = 0$ since $Y$ is not contained in a plane. Using the long cohomology sequences of the exact sequence $0\rightarrow I_{Y}  \rightarrow \mathcal{O}_{\mathbb{P}^{3}} \rightarrow \mathcal{O}_{Y} \rightarrow 0$ and of this sequence twisted by $\mathcal{O}_{\mathbb{P}^{3}}(-1)$, we have $h^{2}({I}_{Y})= h^{1}(\mathcal{O}_{Y}) = g = 0$, $h^{3}({I}_{Y})= h^{2}(\mathcal{O}_{Y}) =0$ and $h^{3}(I_{Y}(-1)) = h^{2}(\mathcal{O}_{Y}(-1)) = 0$.

\noindent Finally
$$\chi(I_{Y}) = \chi(\mathcal{O}_{\mathbb{P}^{3}})-\chi(\mathcal{O}_{Y}) = 1 -(0(n+3)+1-0) = 0$$
\noindent implies $h^1(I_Y)=0$ since $h^0(I_Y)= h^2(I_Y)=h^3(I_Y)=0$.

Hence the sheaf $I_{Y}(2)$ is a linear sheaf. Then one uses the exact sequence \eqref{auxseq} to prove that 
$F$ is an instanton sheaf. Indeed, the long cohomology sequences of this sequence twisted by $-2$, $-3$ give us:

\begin{equation}\label{cohoY}
H^0(F(-1))=H^0(I_Y(1))=0, ~
H^1(F(-2))=H^1(I_Y)=0,~
H^2(F(-2))=H^2(I_Y)=0.
\end{equation}

Finallly suppose $H^3(F(-3)) \neq 0$. As $H^3(F(-3))=\Ext^3(\mathcal{O}_{\mathbb{P}^{3}},F(-3))=\Hom(F(1),\mathcal{O}_{\mathbb{P}^{3}})^*$,  there exists a nontrivial map $\sigma: F(1) \rightarrow \mathcal{O}_{\mathbb{P}^{3}}$.

Suppose $\sigma$ is not surjective and consider that the image of $\sigma$ is the ideal sheaf \textcolor{black}{$I_Z(a)$, for some $a\leq 0$.}  Then we have the following diagram

$$\xymatrix{
& & 0 \ar[d] & & \\
0 \ar[r] & \mathcal{O}_{\mathbb{P}^{3}}^{\oplus 2} \ar[r] \ar@{=}[d] & K \ar[r] \ar[d] & E \ar[r] \ar[d] & 0\\
0 \ar[r] & \mathcal{O}_{\mathbb{P}^{3}}^{\oplus 2} \ar[r]^{{\alpha}} \ar[dr]^{0} & F(1) \ar[r] \ar[d]^{\sigma} & I_Y(3) \ar[r]  & 0\\
& & I_Z(a) \ar[d] & & \\
& & 0 & & 
}$$

\noindent where $\sigma\alpha=0$, since \textcolor{black}{$H^0(I_Z(a))=0$.} As $F$ has rank 3 and \textcolor{black}{$I_Z(a)$} rank 1 follows that $K$ has rank 2 and hence $E$ has rank 0. Moreover, by Snake
lemma, $E$ is a subsheaf of $I_Y(3)$, so $E$ is a torsion free sheaf of rank 0 which is a contradiction. 

Now suppose $\sigma$ is surjective. Then we get the following commutative diagram:

$$\xymatrix{
& 0 \ar[d] & 0 \ar[d] & & \\
0 \ar[r] & \mathcal{O}_{\mathbb{P}^{3}} \ar[r] \ar[d] & K \ar[r] \ar[d] & I_Y(3) \ar[r]\ar@{=}[d] & 0\\
0 \ar[r] & \mathcal{O}_{\mathbb{P}^{3}}^{\oplus 2} \ar[r] \ar[d] & F(1) \ar[r] \ar[d] & I_Y(3) \ar[r] & 0\\
 & \mathcal{O}_{\mathbb{P}^{3}} \ar[d] \ar@{=}[r] &  \mathcal{O}_{\mathbb{P}^{3}} \ar[d] & & \\
 & 0  & 0 & & \\
}$$

\noindent where $K$ is locally free. By dualizing the first exact row we get the long exact sequence of cohomology 

$$0 \rightarrow \mathcal{O}_{\mathbb{P}^{3}}(-3) \rightarrow {\mathcal H}{\it om}(K, \mathcal{O}_{\mathbb{P}^{3}}) \rightarrow  \mathcal{O}_{\mathbb{P}^{3}} \rightarrow \mathcal{E}{\it xt}^1(I_Y(3),  \mathcal{O}_{\mathbb{P}^{3}}) \rightarrow 0.$$

Since $\mathcal{E}{\it xt}^{1}(I_{Y}(k),\mathcal{O})\cong  \omega_{Y}(4-k)$, we have $\omega_Y(1)= \mathcal{E}{\it xt}^1(I_Y(3),  {\mathcal O}_{\mathbb{P}^{3}})$ and so we get an epimorphism $\mathcal{O}_{\mathbb{P}^{3}} \rightarrow \omega_Y(1)$. As ${\bigwedge}^{2} N_{Y/\mathbb{P}^{3}} = O_{Y}((4n+10)pt)$, we also get

$$\displaystyle \omega_{Y}(1) =  \omega_{\mathbb{P}^{3}} \otimes {\bigwedge}^{2} N_{Y/\mathbb{P}^{3}}(1) = \mathcal{O}_{Y}((n+1)pt).$$

Hence every section in $H^0(\omega_Y(1))$ vanishes at $n+1$ points and there can be no epimorphism ${\mathcal O}_{\mathbb{P}^{3}} \rightarrow \omega_Y(1)$ which is a contradiction.

\end{proof}

Next we present the definition of spectrum of a torsion free sheaf, introduced by Okonek and Spindler in \cite{OS1984}. This is a very important tool for the classification of bundles, since they provide an effective method to study all possible families of sheaves in their moduli spaces. \textcolor{black}{Before let $E$ be a rank $r$ vector bundle over $\mathbb{P}^n$ and $G_n$ be the Grassmann manifold of lines in $\mathbb{P}^n$. Denote by $\mathit{l}$ the point of $G_n$ which corresponds to a projective line $L\subset \mathbb{P}^n$. By Grothendieck's theorem for every $\mathit{l}\in G_n$ there is a $r$-tuple $a_E(\mathit{l})=(a_1,\cdots,a_r)\in \mathbb{Z}^r$, with $a_1\geq\cdots \geq a_r$ such that $ E|_L \cong \displaystyle \bigoplus_{i=1}^{r}\mathcal{O}_L(a_i)$. The $r$-tuple $a_E(\mathit{l})=(a_1,\cdots,a_r)\in \mathbb{Z}^r$ is called the {\bf splitting type} of $E$ on $L$. By \cite[Lemma 3.2.2]{OSS1980} that is as open dense subset $U\subset G_n$ such that the splitting type of $E$ is constant equals to $\displaystyle a_E=\displaystyle \inf_{\mathit{l}\in G_n} a_E(\mathit{l})$, this is called the {\bf generic splitting type} of $E$.} 

\begin{theorem}\label{spectro}
 Let $F$ be a rank $r$ torsion free sheaf on $\mathbb{P}^3$, with generic splitting type $(a_1,\cdots,a_r)$, with $a_i \in \mathbb{Z}$, and $a_1 \leq a_2 \leq \cdots \leq a_r$, and $s = h^0({\mathcal E}{\it xt}^2(F,\mathcal{O}_{\mathbb{P}^3}))$.
 Then there exists a list of $n$ integers $(k_1, k_2, \cdots ,k_n)$, with $k_1 \leq k_2 \leq  \cdots \leq k_n $ such that
 \begin{itemize}
  \item[a.] $h^1(F(l)) = s + \displaystyle \sum_{i = 1}^{n}h^0(\mathcal{O}_{\mathbb{P}^1}(k_i + l + 1)$ if $l \leq -a_s-1$;
  \item[b.] $h^2(F(l))= \displaystyle \sum_{i = 1}^{n}h^1(\mathcal{O}_{\mathbb{P}^1}(k_i + l + 1)$ if $l \geq a_1 - 3$.
 \end{itemize}
\end{theorem}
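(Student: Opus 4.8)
The plan is to exhibit a vector bundle $\mathcal{S}$ on $\mathbb{P}^1$ --- the \emph{spectrum} --- of some rank $n$, with splitting type $\mathcal{S}\cong\bigoplus_{i=1}^n\mathcal{O}_{\mathbb{P}^1}(k_i)$, such that in the two displayed ranges the intermediate cohomology of $F$ is computed by the cohomology of the twists $\mathcal{S}(l+1)$ on $\mathbb{P}^1$. Then (a) and (b) follow at once, since on $\mathbb{P}^1$ one has $h^0(\mathcal{S}(l+1))=\sum_i h^0(\mathcal{O}_{\mathbb{P}^1}(k_i+l+1))$ and $h^1(\mathcal{S}(l+1))=\sum_i h^1(\mathcal{O}_{\mathbb{P}^1}(k_i+l+1))$.

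First I would isolate the constant $s$. By Serre duality $h^1(F(l))=\dim\Ext^2(F,\mathcal{O}_{\mathbb{P}^3}(-4-l))$, and I would feed this into the local-to-global spectral sequence $H^p(\mathcal{E}xt^q(F,\mathcal{O}_{\mathbb{P}^3})(m))\Rightarrow \Ext^{p+q}(F,\mathcal{O}_{\mathbb{P}^3}(m))$ with $m=-4-l\gg0$. Writing $0\to F\to F^{\vee\vee}\to Q\to 0$ with $Q$ supported in codimension $\ge2$, one checks that $\mathcal{E}xt^2(F,\mathcal{O}_{\mathbb{P}^3})\cong \mathcal{E}xt^3(Q,\mathcal{O}_{\mathbb{P}^3})$ is a finite-length (zero-dimensional) sheaf, so its $H^0$ is twist-invariant and the spectral sequence collapses for $m\gg0$, giving $h^1(F(l))\to s=h^0(\mathcal{E}xt^2(F,\mathcal{O}_{\mathbb{P}^3}))$ as $l\to-\infty$. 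This explains the summand $+s$ in (a); in (b) there is no analogue because $h^2(F(l))\to0$ as $l\to+\infty$. It also shows that the remaining, \emph{variable}, part of the cohomology is what must be packaged by $\mathcal{S}$.

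The heart of the matter is to produce $\mathcal{S}$ and prove it is locally free; I expect this to be the main obstacle. I would restrict to a general line $L\cong\mathbb{P}^1$ (cut out by two general linear forms, with coordinate ring $k[u,v]$) and use the hyperplane-section sequences $0\to F(l-1)\xrightarrow{\,h\,}F(l)\to F_H(l)\to0$ to organize $\bigoplus_l H^2(F(l))$ (and dually $\bigoplus_l H^1(F(l))$) into a finitely generated graded $k[u,v]$-module whose sheafification is $\mathcal{S}$. The two ranges in the statement are precisely those in which the generic splitting type $F|_L\cong\bigoplus_i\mathcal{O}_L(a_i)$ forces the vanishing of $H^0(F(l))$ (for (a)) and of $H^3(F(l))$ (for (b)), so that only $H^1$ and $H^2$ remain in play. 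Local freeness of $\mathcal{S}$ on $\mathbb{P}^1$ is equivalent to the module having depth $\ge1$, i.e.\ to injectivity of multiplication by a general linear form on the intermediate cohomology in these ranges; concretely this amounts to the convexity estimate $h^2(F(l-1))-2h^2(F(l))+h^2(F(l+1))\ge0$ for $l$ in range (b), whose second difference then counts $\#\{i:k_i=-l-2\}$. Proving this maximal-rank/injectivity statement --- via the restriction sequences together with general-position arguments and semicontinuity --- is the genuinely technical step, and is where the genericity of $L$ and the generic splitting type are essential.

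Once $\mathcal{S}$ is known to be locally free, Grothendieck's splitting theorem gives $\mathcal{S}\cong\bigoplus_{i=1}^n\mathcal{O}_{\mathbb{P}^1}(k_i)$ with $n=\mathrm{rk}\,\mathcal{S}$ (read off from the leading term of $\chi(F(l))$, and equal to $c_2$ in the normalized case), and we may order $k_1\le\cdots\le k_n$. Finally I would read off the formulas: in range (a) the construction identifies $H^1(F(l))$, after splitting off the constant $s$, with $H^0(\mathbb{P}^1,\mathcal{S}(l+1))$, and in range (b) it identifies $H^2(F(l))$ with $H^1(\mathbb{P}^1,\mathcal{S}(l+1))$, yielding exactly (a) and (b). The two descriptions are forced to agree on the overlap of the ranges because there $h^0-h^1$ of $\mathcal{S}(l+1)$ reproduces $\chi$ of $F(l)$ minus the $H^0$ and $H^3$ contributions, which the generic splitting type controls.
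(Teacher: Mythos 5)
For this statement the paper offers no argument of its own: the result is quoted from Okonek--Spindler and the ``proof'' is just the citation \cite[Theorem 2.3]{OS1984}. Your outline does reconstruct, correctly in spirit, the strategy behind that cited proof: organize the intermediate cohomology $\bigoplus_l H^1(F(l))$ and $\bigoplus_l H^2(F(l))$ into finitely generated graded modules over $k[u,v]$ for two general linear forms, sheafify over $\mathbb{P}^1$, prove local freeness, and invoke Grothendieck's splitting theorem; and your identification of the constant $s=h^0(\mathcal{E}xt^2(F,\mathcal{O}_{\mathbb{P}^3}))$ as the stable value of $h^1(F(l))$ for $l\ll 0$ (using $0\to F\to F^{\vee\vee}\to Q\to 0$, the vanishing of $\mathcal{E}xt^{\geq 2}$ of a reflexive sheaf on a smooth threefold, and the local-to-global spectral sequence) is sound.

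Nevertheless, as a proof your text has a genuine gap, one you flag yourself: the local freeness of the spectrum sheaf $\mathcal{S}$ --- equivalently the injectivity (resp.\ maximal-rank) statement for multiplication by a general linear form on the graded modules in the ranges $l\leq -a_s-1$ and $l\geq a_1-3$ --- is not proved but only announced as ``the genuinely technical step.'' That step is the actual mathematical content of the theorem; without it the sheafified module is merely coherent on $\mathbb{P}^1$, its torsion part ruins the formulas in (a) and (b), and no list $(k_1,\dots,k_n)$ exists. Two further points are asserted rather than established: that the \emph{same} list of integers computes both $h^1$ in range (a) and $h^2$ in range (b) (your Euler-characteristic argument on the overlap only controls $h^0-h^1$ of $\mathcal{S}(l+1)$ and presupposes knowing $\chi(F(l))$ in terms of $\sum_i k_i$, i.e.\ the relation of the spectrum to the Chern classes), and that the rank of $\mathcal{S}$ equals $c_2$, which in the paper is quoted separately (see \cite[Proposition 6]{A2020}). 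So your proposal is a faithful plan of the argument in \cite{OS1984}, but the pivotal lemmas it rests on are left unproven, whereas the paper sidesteps them by citation.
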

\begin{proof}
See \cite[Theorem 2.3]{OS1984}.
\end{proof}

\begin{definition}
Let $F$ be a rank $r$ torsion free sheaf on $\mathbb{P}^3$, with generic splitting type $(a_1,\cdots,a_r)$, with $a_i \in \mathbb{Z}$, and $a_1 \leq a_2 \leq \cdots \leq a_r$, and $s = h^0({\mathcal E}{\it xt}^2(F,\mathcal{O}_{\mathbb{P}^3}))$.
 Then the list of $m$ integers $(k_1, k_2, \cdots ,k_n)$, provided by the previous theorem is called {\bf spectrum} of the sheaf $F$.
\end{definition}

It is possible to prove that the integer $n$ in the definition above is the second Chern class of the sheaf $F$ (see for instance \cite[Proposition 6]{A2020}). For a more detailed study on the spectrum of sheaves see \cite{A2020} or \cite{OS1984}.

Next we present a characterization of instanton bundles by its spectrum.

\begin{proposition}\label{instanton1} Let $F$ be a semistable rank $3$ vector bundle on $\mathbb{P}^{3}$, with $c_1(F)=0$ and $c_2(F)=n$. Then $F$ is an instanton bundle if and only if $k_{F} = \underbrace{(0,\cdots ,0)}_{n} $.
\end{proposition}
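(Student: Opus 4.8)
\emph{Proof proposal.} The plan is to reduce the four instanton vanishing conditions to just two, and then read those two off the spectrum $k_F$. First I would observe that, since $F$ is Gieseker (hence slope) semistable with $\mu(F)=0$, two of the four conditions are automatic. Indeed, any nonzero map $\op{3}(1)\to F$ has image isomorphic to $\op{3}(1)$ (its kernel is a rank-zero subsheaf of the line bundle $\op{3}(1)$, hence torsion, hence zero), giving a rank-one subsheaf of slope $1>0=\mu(F)$, which contradicts $\mu$-semistability; thus $H^0(F(-1))=\Hom(\op{3}(1),F)=0$. Applying the same argument to $F^{*}$, which is again semistable with $c_1=0$, together with Serre duality $H^3(F(-3))\cong H^0(F^{*}(-1))^{*}$, gives $H^3(F(-3))=0$. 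Consequently $F$ is an instanton bundle if and only if $H^1(F(-2))=H^2(F(-2))=0$.

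Next I would compute these two groups from the spectrum $k_F=(k_1,\dots,k_n)$ by applying Theorem \ref{spectro} at $l=-2$. Since $F$ is locally free, $s=h^0(\lext^2(F,\op{3}))=0$, so the two formulas read
\begin{equation*}
h^1(F(-2))=\sum_{i=1}^{n}h^0(\op{1}(k_i-1)),\qquad h^2(F(-2))=\sum_{i=1}^{n}h^1(\op{1}(k_i-1)).
\end{equation*}
Using $h^0(\op{1}(d))=\max(d+1,0)$ and $h^1(\op{1}(d))=\max(-d-1,0)$ with $d=k_i-1$, the two summands equal $\max(k_i,0)$ and $\max(-k_i,0)$ respectively. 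Hence $h^1(F(-2))=\sum_i\max(k_i,0)$ and $h^2(F(-2))=\sum_i\max(-k_i,0)$, each a sum of nonnegative integers. Therefore $h^1(F(-2))=0$ forces every $k_i\le 0$, $h^2(F(-2))=0$ forces every $k_i\ge 0$, and both vanish simultaneously if and only if $k_i=0$ for all $i$, i.e. $k_F=(0,\dots,0)$. Combined with the first paragraph, this is exactly the asserted equivalence.

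The step requiring the most care is checking that $l=-2$ lies in the range of validity of both formulas in Theorem \ref{spectro}; this is where semistability is used a second time. To verify it I would first pin down the generic splitting type $(a_1\le a_2\le a_3)$ of $F$. By the Grauert--M\"ulich theorem \cite{OSS1980} consecutive splitting integers differ by at most $1$, and since $a_1+a_2+a_3=c_1(F)=0$ the only possibilities are $(0,0,0)$ and $(-1,0,1)$. In the first case the admissible ranges for (a) and (b) are $l\le -1$ and $l\ge -3$, and in the second they are $l\le -2$ and $l\ge -4$; in either case both ranges contain $l=-2$, so the evaluation above is legitimate. Thus the whole argument hinges on the fact, forced by semistability, that the spectrum controls the cohomology precisely at the twist $-2$ that governs the instanton conditions.
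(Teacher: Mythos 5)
Your proof is correct, and its overall strategy is the same as the paper's: use semistability (plus Serre duality) to dispose of $H^0(F(-1))$ and $H^3(F(-3))$, and read the remaining conditions $h^1(F(-2))=h^2(F(-2))=0$ off the spectrum via Theorem \ref{spectro} at $l=-2$. There are two modest differences worth recording. In the forward implication the paper only uses $h^1(F(-2))=0$ to get $k_i\le 0$ and then invokes \cite[Theorem 7.3]{OS1984} to get $k_1+\cdots+k_n=0$; you instead use both vanishings at $l=-2$, obtaining $\sum_i\max(k_i,0)=\sum_i\max(-k_i,0)=0$ directly, which avoids citing the sum-zero property of the spectrum and makes the two directions of the equivalence perfectly symmetric. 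You also verify, via Grauert--M\"ulich, that the generic splitting type of a semistable bundle with $c_1=0$ is $(0,0,0)$ or $(-1,0,1)$ and hence that $l=-2$ lies in the range of validity of both formulas of Theorem \ref{spectro}; the paper applies the theorem without comment on this point, so your check is a welcome (and needed) precision rather than a redundancy. Both arguments otherwise rely on the same inputs: semistability giving $h^0(F(-1))=h^0(F^{*}(-1))=0$ (\cite[Remark 1.2.6]{OSS1980} in the paper, your slope argument being an equivalent elementary substitute) and Serre duality for the $h^3$ and $h^2$ computations.
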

\begin{proof} Let $F$ be a rank $3$ semistable  vector bundle of charge $n$ on $\mathbb{P}^{3}$ and let $k_{F}= (k_1,\cdots ,k_{n})$ be its \textcolor{black}{spectrum}. If $F$ is an instanton bundle of charge $n$ on $\mathbb{P}^{3}$ then  $h^{1}(F(-2)) = 0$ which implies, by Theorem \ref{spectro},  

$$ \displaystyle h^{1}(F(-2)) = h^{0}\left(\bigoplus_{i=1}^{n}{\mathcal{O}}_{\mathbb{P}^{1}}(k_{i}-1)\right) = \sum_{i=1}^{n}h^{0}({\mathcal{O}}_{\mathbb{P}^{1}}(k_{i}-1)).$$

 Hence $k_{i} \leq 0$, $ \quad \forall i \in\{1, \cdots, n\}$. By \cite[Theorem 7.3]{OS1984}, $k_{1}+\cdots +k_{n}=0$. As for any \textcolor{black}{spectrum} we have $k_{1} \leq k_{2} \leq \cdots \leq k_{n}$, then  $k_{F} = (0,\cdots ,0)$. 

On the other hand, if $F$ is a rank $3$ semistable vector bundle on $\mathbb{P}^3$, with $c_1(F)=0$, then by \cite[Remark 1.2.6]{OSS1980}, $h^{0}(F(-1)) = h^{0}(F^{*}(-1))=0$. By Serre duality, we get $h^{3}(F(-3))$=0.
Moreover, since $k_{F} = (0,\cdots ,0)$ we get
\begin{itemize}
    \item[i.] $h^{1}(F(-2)) = h^{0}(\oplus_{i=1}^{n}{\mathcal{O}}_{\mathbb{P}^{1}}(-1)) = 0;$
    \item[ii.] $h^{2}(F(-2)) = h^1(E^*(-2))= h^{0}(\oplus_{i=1}^{n}{\mathcal{O}}_{\mathbb{P}^{1}}(-1))=0.$

\end{itemize}
   
Therefore $F$ is an instanton bundle. \end{proof}

\section{Numerical invariants for rank 3 vector bundles} \label{invariants}

Our main goal in this section is to develop tools in order to compute the dimension of the tangent space at a point $F$ in the moduli space of rank $3$ stable vector bundles in terms of the Chern classes of $F$. First, we obtain a relation between the main invariants of a rank $3$ vector bundle on $\mathbb{P}^3$ and its associated curve $Y \subseteq \mathbb{P}^{3}$.

This following result is due to Vogelaar (see \cite{V1978}) but here we present a different proof.

\begin{proposition}\label{chernclass}Let $F$ be a rank $3$ vector bundle on $\mathbb{P}^3$, with Chern classes $c_1,c_2,c_3$, and let $Y \subseteq \mathbb{P}^{3}$ be its corresponding curve, such that $Y$ has degree $d$ and arithmetic genus $g$. Then $d = c_{2}$ and $c_{3}-4c_{2}+ c_{1}c_{2} = 2g-2$, where $c_1,c_2,c_3$ are the Chern classes of $F$.
\end{proposition}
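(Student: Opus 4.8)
The plan is to exploit the Hartshorne-Serre exact sequence
$$0\rightarrow \mathcal{O}_{\mathbb{P}^{3}}^{\oplus 2} \rightarrow F \rightarrow I_{Y}(c_1) \rightarrow 0$$
relating $F$ to its corresponding curve $Y$, and to compute Chern classes by using the multiplicativity of the total Chern class on short exact sequences together with the standard Chern class computation for an ideal sheaf of a curve. First I would record $c(F) = c(\mathcal{O}_{\mathbb{P}^3}^{\oplus 2})\cdot c(I_Y(c_1)) = c(I_Y(c_1))$, so that all the Chern data of $F$ is encoded in $I_Y(c_1)$.

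The key computation is the Chern class of $I_Y(c_1)$. I would start from $0\rightarrow I_Y \rightarrow \mathcal{O}_{\mathbb{P}^3} \rightarrow \mathcal{O}_Y \rightarrow 0$, which gives $c(I_Y) = c(\mathcal{O}_Y)^{-1}$, and compute the Chern classes of the structure sheaf of a curve of degree $d$ and arithmetic genus $g$ via Grothendieck-Riemann-Roch (or the known formulas $c_1(\mathcal{O}_Y)=0$, $c_2(\mathcal{O}_Y) = -[Y] = -d$ in terms of the class of a line, and $c_3(\mathcal{O}_Y)$ expressible through $d$ and $g$). Equivalently, one has $c_2(I_Y) = d$ and $c_3(I_Y) = 2(g-1)+ (\text{correction})$, and twisting by $\mathcal{O}_{\mathbb{P}^3}(c_1)$ shifts these in the standard way. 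Matching the degree-$2$ part immediately yields $d = c_2$; matching the degree-$3$ part yields the relation involving $c_3$, $c_2$, $c_1 c_2$ and $2g-2$.

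Concretely, after twisting I would use the formula for the third Chern class of a twist: if $I_Y$ has Chern roots bookkept correctly, then $c_3(I_Y(c_1)) = c_3(I_Y) + c_1\cdot c_2(I_Y)$ (the degree-$3$ terms picking up a contribution $c_1 c_2$ from the twist), while the top Chern class $c_3(\mathcal{O}_Y)$ of the structure sheaf is computed to equal $2-2g - 4d$ up to sign conventions, reflecting that $\chi(\mathcal{O}_Y) = 1-g$ and the Euler-characteristic/degree data of a curve in $\mathbb{P}^3$. Assembling these pieces should give $c_3 - 4c_2 + c_1 c_2 = 2g-2$ directly.

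The main obstacle will be getting the sign and twisting conventions exactly right, particularly the degree-$3$ Chern class of the structure sheaf $\mathcal{O}_Y$ and how it depends on both $d$ and $g$; the factor $-4c_2$ in the final relation is the telltale sign that the computation of $c_3(\mathcal{O}_Y)$ must correctly account for the canonical/arithmetic-genus contribution of a curve embedded in $\mathbb{P}^3$ (the appearance of $4 = \deg \mathcal{O}_{\mathbb{P}^3}(1)\cdot(-K_{\mathbb{P}^3})$-type data). I would therefore be careful to use $\chi(I_Y(\ell))$ via Riemann-Roch on $\mathbb{P}^3$ as an independent cross-check: computing $\chi(\mathcal{O}_Y(\ell)) = d\ell + 1 - g$ and comparing the resulting polynomial in $\ell$ against the Chern-class-based Riemann-Roch expansion for $F$ should pin down both identities simultaneously and confirm the coefficients.
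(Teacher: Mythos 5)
Your route is correct in substance but genuinely different from the paper's. You work at the level of total Chern classes: Whitney's formula on the Hartshorne--Serre sequence gives $c(F)=c(I_Y(c_1))$, then $c(I_Y)=c(\mathcal{O}_Y)^{-1}$ with $c_2(\mathcal{O}_Y)=-d$ and $c_3(\mathcal{O}_Y)=2-2g-4d$ (your formula is right), so $c_2(I_Y)=d$, $c_3(I_Y)=2g-2+4d$, and a twist finishes the computation. The paper never manipulates Chern classes of $I_Y$ directly: it computes $\chi(F(m))$ twice --- once from the two exact sequences, which reduces everything to $\chi(\mathcal{O}_{\mathbb{P}^3}(m))$ and the Hilbert polynomial $\chi(\mathcal{O}_Y(m))=dm+1-g$, and once by Hirzebruch--Riemann--Roch in terms of $c_1,c_2,c_3$ --- and then matches the coefficients of the two polynomials in $m$ in degrees $0$ and $1$. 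That Euler-characteristic comparison is exactly the ``cross-check'' you propose in your last paragraph, so the two arguments are close; yours buys a complete determination of $c(F)$ (all three classes at once), while the paper's sidesteps the sign and twisting bookkeeping you correctly identify as the danger point. One concrete correction to your sketch: for a rank-one sheaf with trivial first Chern class such as $I_Y$, the twist formula is $c_3(I_Y(c_1))=c_3(I_Y)-c_1\,c_2(I_Y)$, not $+\,c_1\,c_2(I_Y)$; you can check this on a line $Y\subset\mathbb{P}^3$, where the Koszul resolution gives $c(I_Y)=1+H^2+2H^3$ but $c(I_Y(1))=(1-H)^{-1}=1+H+H^2+H^3$. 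With your sign you would land on $c_3-4c_2-c_1c_2=2g-2$ rather than the stated relation; with the minus sign one gets $c_3(F)=2g-2+4d-c_1d$, and since $d=c_2$ this is precisely $c_3-4c_2+c_1c_2=2g-2$. So the plan goes through once that sign is fixed (or once you fall back on the Hilbert-polynomial comparison, which is the paper's proof).
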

\begin{proof}

Twisting the sequence

$$0 \rightarrow \mathcal{O}_{\mathbb{P}^3}^{\oplus 2} \rightarrow F \rightarrow I_{Y}(c_{1})\rightarrow 0,
$$

\noindent by $\mathcal{O}_{\mathbb{P}^3}(m)$, with $m\in \mathbb{Z}$, we get that 
$$\chi(F(m)) =  2\chi(\mathcal{O}_{\mathbb{P}^3}(m))+\chi(I_Y(c_1+m)).$$

By the exact sequence $0 \rightarrow I_{Y} \rightarrow {\mathcal{O}_{\mathbb{P}^3}} \rightarrow {\mathcal{O}}_{Y} \rightarrow 0$, we obtain \textcolor{black}{$\chi_{\mathcal{O}_{\mathbb{P}^3}}(m)=\chi_{{\mathcal{O}}_{Y}}(m)+\chi_{I_{Y}}(m)$.} Then 

\begin{equation}\label{chi1}
\chi(F(m)) = 2\binom {m+3} {3} + \binom {m+c_{1}+3} {3} - (m+c_{1})d -1+g.
\end{equation}

By Hirzenbruch-Riemann-Roch formula, we also have:
\begin{equation}\label{chi2}
\textcolor{black}{\chi(F(m)) = \frac{1}{6}({c_{1}}^{3}-3c_{1}c_{2} + 3c_{3}) + \frac{1}{2}m({c_{1}}^{2}-2c_{2})+ \frac{m^{2}c_{1}}{2}+ \frac{m^{3}}{2} + ({c_{1}}^{2}-2c_{2}) + 2mc_{1} + 3m^{2} + \frac{11}{6}c_1 + \frac{11}{2}m + 3.}
\end{equation}

Comparing the coefficients of \eqref{chi1} and \eqref{chi2} on degree $0$ and $1$, we have $d= c_{2}$ and $c_{3}-4c_{2}+c_{1}c_{2}=2g-2$.

\end{proof}

Similiar as in the rank $2$ case we have the following result.

\begin{corollary}
Let $F$ be a rank $3$ vector bundle on $\mathbb{P}^3$, with Chern classes $c_1,c_2,c_3$. Then $c_3\equiv c_1c_2 (mod 2)$.
\end{corollary}

\textcolor{black}{As an application of Proposition \ref{chernclass} we can provide a converse for Theorem \ref{ratinst}.}

\begin{proposition}\label{instrat}Consider a rank $3$ instanton bundle $F$ on $\mathbb{P}^3$ of charge $n$, without global sections and suppose that $ h^{0}(F(1)) \geq 2$. Then we can write $F(1) \in \Ext^1(I_Y(3), \mathcal{O}_{\mathbb{P}^{3}}^{\oplus 2})$, where $Y$ is a rational curve in $\mathbb{P}^3$ of degree $n+3$.
\end{proposition}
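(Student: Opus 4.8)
The plan is to run the Hartshorne-Serre machinery of Theorem \ref{serre} in reverse: given the pair $(F,\sigma)$ with $\sigma \in H^0(F(1))$, produce a curve $Y$ from the degeneracy locus of two sections and then verify that $Y$ is rational of degree $n+3$. First I would set $G = F(1)$, so that $G$ is a rank $3$ bundle with $c_1(G) = 3$, and by the standard twisting of Chern classes $c_2(G) = c_2(F) + 2c_1(F) + 3 = n+3$ (using $c_1(F)=0$, $c_2(F)=n$). Since $h^0(G) = h^0(F(1)) \geq 2$, I can choose two linearly independent sections $t_1, t_2 \in H^0(G)$. I would argue that, for a generic choice of two independent sections in a (at least two-dimensional) space of global sections, the dependency locus $Y$ where $t_1 \wedge t_2$ vanishes is a codimension $2$ locally complete intersection curve; the point is that a rank $3$ bundle with at least two sections degenerates in the expected codimension for a generic pair, and the zero scheme of the induced section of $\bigwedge^2 G = G(0)$-type construction has codimension $2$. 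This gives precisely the exact sequence
$$0 \rightarrow \mathcal{O}_{\mathbb{P}^{3}}^{\oplus 2} \rightarrow F(1) \rightarrow I_{Y}(3) \rightarrow 0,$$
which is the content of the claimed membership $F(1) \in \Ext^1(I_Y(3), \mathcal{O}_{\mathbb{P}^3}^{\oplus 2})$.

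Next I would pin down the numerical invariants of $Y$. By Proposition \ref{chernclass} applied to $G = F(1)$ (whose Chern classes are $c_1 = 3$, $c_2 = n+3$), the associated curve has degree $d = c_2(G) = n+3$, giving the claimed degree immediately. For the genus I would use the second relation $c_3(G) - 4c_2(G) + c_1(G)c_2(G) = 2g-2$. Here I must compute $c_3(G) = c_3(F(1))$ from $c_3(F) = 0$ (since $F$ is an instanton bundle with spectrum $(0,\dots,0)$, we have $c_1(F)=0,c_2(F)=n$, and the instanton/reflexive symmetry forces $c_3(F)=0$). Twisting the Chern classes, $c_3(F(1)) = c_3(F) + c_2(F)\cdot 1 + c_1(F) + 1 = n + 1$ for the rank $3$ twist formula; substituting $c_3(G)=n+1$, $c_2(G)=n+3$, $c_1(G)=3$ yields $2g-2 = (n+1) - 4(n+3) + 3(n+3) = -2$, hence $g = 0$. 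This shows $Y$ has arithmetic genus $0$, consistent with $Y$ being rational.

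The main obstacle I expect is the smoothness and irreducibility of $Y$ — that is, upgrading "arithmetic genus $0$ locally complete intersection curve of degree $n+3$" to "smooth rational curve." Genus zero plus lci is a strong constraint, but an arbitrary pair of sections could produce a reducible or non-reduced degeneracy scheme, or one whose support is not connected. I would handle this by invoking the instanton cohomology vanishings: from the sequence above, the vanishing $H^0(F)=0$ (no global sections of $F$, which is hypothesized) and the linear-sheaf vanishings of $I_Y(2)$ force $h^0(I_Y(1))=0$, so $Y$ is nondegenerate, and the genus computation forces $p_a(Y)=0$, which for a connected lci curve of positive degree means $Y$ is a smooth rational curve. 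To secure connectedness and reducedness, I would show $h^1(I_Y) = 0$ (equivalently $H^1$ of the structure sheaf sequence vanishes), which follows from the linear-sheaf property of $I_Y(2)$ together with the Euler characteristic computation $\chi(I_Y)=0$ already appearing in the proof of Theorem \ref{ratinst}. Finally I would remark that the constructed extension class in $\Ext^1(I_Y(3), \mathcal{O}_{\mathbb{P}^3}^{\oplus 2})$ is exactly the class whose associated bundle is $F(1)$, which is forced to be the given $F(1)$ by the uniqueness part of Theorem \ref{serre} (since $H^1(\mathcal{O}_{\mathbb{P}^3}(-3))=0$), thereby closing the converse to Theorem \ref{ratinst}.
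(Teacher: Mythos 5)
Your proposal follows the paper's own argument: the two independent sections of $F(1)$ produce the exact sequence $0 \to \mathcal{O}_{\mathbb{P}^{3}}^{\oplus 2} \to F(1) \to I_{Y}(3) \to 0$, and Proposition \ref{chernclass} applied to $F(1)$, whose Chern classes are $(3,n+3,n+1)$ (using $c_3(F)=0$, which the paper also uses implicitly), gives $d=n+3$ and $2g-2=(n+1)-4(n+3)+3(n+3)=-2$, hence $g=0$ and $Y$ rational of degree $n+3$ --- exactly the paper's proof. Your extra paragraphs on genericity of the pair of sections and on smoothness/connectedness of $Y$ go beyond what the statement (and the paper's proof) claims and are not needed here; the only caution is that your assertion that a connected locally complete intersection curve with $p_a=0$ must be a smooth rational curve is not literally true (e.g.\ two incident lines), whereas the paper, like the computation itself, concludes only arithmetic genus zero.
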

\begin{proof}
Indeed, as $h^0(F(1)) \geq 2$ we get two linearly independent sections of $F(1)$ which determine an exact sequence

$$0 \rightarrow \mathcal{O}_{\mathbb{P}^{3}}^{\oplus 2} \rightarrow F(1)\rightarrow I_{Y}(3) \rightarrow 0$$

\noindent where $Y$ is a locally complete intersection curve in $\mathbb{P}^3$.
The Chern classes of $F(1)$ are $(3,n+3,n+1)$. Then using Theorem \ref{chernclass}, we have:

$$d=n+3$$
$$n+1-4(n+3)+3(n+3)=2g-2$$

\noindent Hence $g=0$ and $d=n+3$ and we conclude that $Y$ is a rational curve of degree $n+3$.
\end{proof}

The next proposition and its corollary will be useful to compute the dimension of irreducible components of the moduli spaces of rank $3$ vector bundles on $\mathbb{P}^3$ with given Chern classes.

\begin{proposition}\label{euler} Let $F$ be a rank $3$ vector bundle on $\mathbb{P}^3$ with Chern classes $c_{1}, c_{2}, c_{3}$. Then
$$\chi(F \otimes F^{*}) = 4{c_{1}}^{2}-12c_{2}+9.$$

\end{proposition}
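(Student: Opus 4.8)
The plan is to compute $\chi(F \otimes F^*)$ via Hirzebruch--Riemann--Roch, using the fact that $F \otimes F^*$ is a rank $9$ bundle whose Chern character is determined by that of $F$. The most direct route is to use the general identity $\chi(F \otimes F^*) = \int_{\mathbb{P}^3} \mathrm{ch}(F)\cdot \mathrm{ch}(F^*) \cdot \mathrm{td}(\mathbb{P}^3)$, and the key structural simplification is that $\mathrm{ch}(F)\cdot\mathrm{ch}(F^*)$ depends only on the Chern roots of $F$ in a symmetric way that kills the odd-degree contributions.

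First I would write $\mathrm{ch}(F) = \sum_i e^{x_i}$ and $\mathrm{ch}(F^*) = \sum_j e^{-x_j}$, where $x_1,x_2,x_3$ are the Chern roots of $F$, so that $\mathrm{ch}(F\otimes F^*) = \sum_{i,j} e^{x_i - x_j}$. Expanding in degrees on $\mathbb{P}^3$ (where everything above degree $3$ vanishes), the degree-$0$ term is $9 = (\mathrm{rk}\,F)^2$, the degree-$1$ term is $\sum_{i,j}(x_i - x_j) = 0$ by antisymmetry, and the degree-$2$ term is $\frac{1}{2}\sum_{i,j}(x_i-x_j)^2$. This last sum expands to $\frac{1}{2}\sum_{i,j}(x_i^2 - 2x_ix_j + x_j^2) = 3\sum_i x_i^2 - (\sum_i x_i)^2$. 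In terms of Chern classes, $\sum_i x_i = c_1$ and $\sum_i x_i^2 = c_1^2 - 2c_2$, so the degree-$2$ part of $\mathrm{ch}(F\otimes F^*)$ equals $3(c_1^2 - 2c_2) - c_1^2 = 2c_1^2 - 6c_2$. The degree-$3$ term is again antisymmetric in the pair $(i,j)$ and hence vanishes.

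Next I would assemble these with the Todd class of $\mathbb{P}^3$, namely $\mathrm{td}(\mathbb{P}^3) = 1 + 2H + \tfrac{11}{6}H^2 + H^3$ where $H$ is the hyperplane class with $\int_{\mathbb{P}^3} H^3 = 1$. Only the terms of total degree $3$ contribute to $\chi$: the degree-$0$ part of $\mathrm{ch}(F\otimes F^*)$ pairs with $H^3$ (the degree-$3$ Todd term) to give $9$; the degree-$2$ part $(2c_1^2 - 6c_2)H^2$ pairs with the degree-$1$ Todd term $2H$ to give $2(2c_1^2 - 6c_2) = 4c_1^2 - 12c_2$; the degree-$1$ and degree-$3$ parts of $\mathrm{ch}(F\otimes F^*)$ both vanish, so they contribute nothing regardless of the Todd factors. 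Summing gives $\chi(F\otimes F^*) = 4c_1^2 - 12c_2 + 9$, as claimed.

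The computation is essentially routine, so there is no serious obstacle; the only points requiring care are the sign and symmetry bookkeeping that forces the odd-degree Chern character terms to cancel, and the correct identification of which Todd-class component pairs with which Chern-character component to land in total degree $3$. In particular, the fact that $c_3$ does not appear in the final formula is a consequence of the degree-$3$ part of $\mathrm{ch}(F\otimes F^*)$ being antisymmetric and hence zero, which I would verify explicitly rather than assume. An alternative, fully equivalent presentation would compare the coefficient extraction with the expansion already carried out for $\chi(F(m))$ in \eqref{chi2}, but the Chern-root expansion above is the cleanest self-contained derivation.
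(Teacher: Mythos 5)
Your proof is correct and follows essentially the same route as the paper: both compute $\mathrm{ch}(F\otimes F^{*})=\mathrm{ch}(F)\cdot\mathrm{ch}(F^{*})=9+(2c_1^2-6c_2)H^2$ and then apply Hirzebruch--Riemann--Roch on $\mathbb{P}^3$. The only difference is presentational -- you organize the cancellation of the odd-degree terms via Chern roots and antisymmetry, while the paper obtains it by multiplying the explicit Chern-class expressions for $\mathrm{ch}(F)$ and $\mathrm{ch}(F^{*})$.
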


\begin{proof} Let $F$ be a rank $3$ vector bundle and consider $H = c_{1}({\mathcal{O}}_{\mathbb{P}^{3}}(1))$, the class of a hyperplane in the Chow ring of $\mathbb{P}^{3}$, then 

$$ch(F) = 3 + c_{1}H + \frac{1}{2}({c_{1}}^{2}-2c_{2})H^{2}+\frac{1}{6}({c_{1}}^{3}-3c_{1}c_{2}+3c_{3})H^{3}$$
and

$$ch(F^{*}) = 3 - c_{1}H + \frac{1}{2}({c_{1}}^{2}-2c_{2})H^{2}+\frac{1}{6}(-{c_{1}}^{3}+3c_{1}c_{2}-3c_{3})H^{3}.$$

Hence

$$ch(F \otimes F^{*}) = ch(F)ch(F^{*})= 9 + 2({c_{1}}^{2}-3c_{2})H^{2}$$

\noindent and using Hirzenbruch-Riemann-Roch theorem we have:

$$\chi(F \otimes F^{*}) =  4{c_{1}}^{2}-12c_{2}+9.$$
\end{proof}

\begin{corollary} \label{ext} Let $F$ be a stable rank $3$ vector bundle on $\mathbb{P}^{3}$ with Chern classes $c_{1}, c_{2}, c_{3}$. Then:

$$\dim \Ext^{1}(F,F)-\dim \Ext^{2}(F,F) = -4{c_{1}}^{2}+ 12c_{2}-8.$$
\end{corollary}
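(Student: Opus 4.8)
The plan is to deduce this Euler-characteristic identity for $\Ext$ groups from the computation of $\chi(F\otimes F^*)$ already carried out in Proposition \ref{euler}, specializing to the case $c_1 = 0$, which is the relevant situation for instanton bundles. First I would recall the standard identification that, since $F$ is locally free, $\sExt^i(F,F) \cong H^i(\P^3, F\otimes F^*) = H^i(\P^3, \sEnd(F))$, so that
\begin{equation}\label{eulerext}
\chi(F\otimes F^*) = \sum_{i=0}^{3} (-1)^i \dim\Ext^i(F,F).
\end{equation}
The plan is then to control the terms $\Ext^0$ and $\Ext^3$ separately so that only $\Ext^1$ and $\Ext^2$ survive on the right-hand side.

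For $\Ext^0(F,F) = \Hom(F,F)$, I would invoke the stability hypothesis: a stable sheaf is simple, so $\Hom(F,F) = \C$ and $\dim\Ext^0(F,F) = 1$. For $\Ext^3(F,F)$, I would use Serre duality on $\P^3$, giving $\Ext^3(F,F) \cong \Hom(F, F\otimes\omega_{\P^3})^* = \Hom(F, F(-4))^*$; since $F$ is stable with $c_1 = 0$, the twist $F(-4)$ has strictly smaller slope, so any nonzero map $F\to F(-4)$ would violate stability, forcing $\Ext^3(F,F) = 0$. Substituting these two values into \eqref{eulerext} collapses it to
\begin{equation}
1 - \dim\Ext^1(F,F) + \dim\Ext^2(F,F) = \chi(F\otimes F^*),
\end{equation}
and rearranging yields $\dim\Ext^1(F,F) - \dim\Ext^2(F,F) = 1 - \chi(F\otimes F^*)$.

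Finally I would substitute the value from Proposition \ref{euler}. Setting $c_1 = 0$ there gives $\chi(F\otimes F^*) = -12c_2 + 9$, so $1 - \chi(F\otimes F^*) = 1 - 9 + 12c_2 = 12c_2 - 8$, matching the claimed right-hand side $-4c_1^2 + 12c_2 - 8$ once $c_1 = 0$. I should double-check whether the corollary is intended to hold for general $c_1$ or only the normalized case; the displayed formula with the $-4c_1^2$ term suggests the authors want it to read uniformly, so I would keep the Serre-duality vanishing argument phrased in terms of the slope comparison (which still works whenever $F$ is stable, since twisting by $\omega_{\P^3} = \op{3}(-4)$ strictly lowers the slope regardless of $c_1$) and carry the full $\chi(F\otimes F^*) = 4c_1^2 - 12c_2 + 9$ through, so that $1 - \chi = -4c_1^2 + 12c_2 - 8$ emerges directly.

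The main obstacle I anticipate is the vanishing of $\Ext^3(F,F)$: one must be careful that the slope inequality from stability genuinely rules out nonzero maps $F \to F(-4)$. Since $\mu(F(-4)) = \mu(F) - 4 < \mu(F)$ and a nonzero homomorphism between stable sheaves of equal rank forces the domain's slope to not exceed the target's, the inequality $\mu(F) > \mu(F) - 4$ indeed gives the contradiction, so the vanishing holds; this is the one place where stability is essential (beyond simplicity), and I would state it carefully rather than treat it as automatic.
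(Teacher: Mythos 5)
Your proposal is correct and follows essentially the same route as the paper: simplicity from stability gives $\dim\Hom(F,F)=1$, Serre duality plus stability kills $\Ext^3(F,F)\cong\Hom(F,F(-4))^*$, and the formula then follows by rearranging $\chi(F\otimes F^*)=4c_1^2-12c_2+9$ from Proposition \ref{euler}. The only cosmetic difference is that the paper deduces the $\Ext^3$ vanishing from semistability of $F\otimes F^*$ (so $H^0(F\otimes F^*(-4))=0$), whereas you compare slopes of $F$ and $F(-4)$ directly; both arguments are valid.
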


\begin{proof} Since $F$ is stable, $ \dim \Hom(F,F) = 1$ and  $F\otimes F^{*}$ is semistable, thus

$$\begin{array}{rcl}
  \Ext^{3}(F,F)   &\simeq  &\Hom(F,F(-4))^{*}\\
     & \simeq& H^0(F\otimes F^{\ast}(-4))\\
     &\simeq &0.
\end{array}$$

{It follows from Proposition \ref{euler} that}

$$\begin{array}{rcl}
  4{c_{1}}^{2}-12c_{2}+9   &=  &\chi(F\otimes F^{*})\\
       & =& 1 -\dim \Ext^{1}(F,F)+\dim \Ext^{2}(F,F).
\end{array}$$

\end{proof}

\section{Rank 3 instanton bundles and rank 2 reflexive sheaves}\label{correspondence}

\subsection{Rank $3$ instanton bundles of charge $n$} Here we find smooth points in the moduli space of rank $3$ vector bundles of Chern classes $(c_1,c_2,c_3)=(0,n,0)$ by analysing the instanton bundles of charge $n$. 

Let $Y\subset \mathbb{P}^3$ be a nondegenerate rational smooth curve of degree $n+3$ and $F$ be its corresponding rank $3$ instanton bundle of charge $n$, by Theorem \ref{ratinst}.
{\color{black}Then we have the following lemma.}

\begin{lemma}\label{lem01}
Let $Y\subset \mathbb{P}^3$ be a \textcolor{black}{nondegenerate smooth rational curve} of degree $n+3$ and $F$ be its corresponding rank $3$ \textcolor{black}{instanton} bundle of charge $n$. If $\Ext^2(I_Y,I_Y)=0$, then
$\Ext^2(F,F)=0$.
\end{lemma}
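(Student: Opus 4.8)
The goal is to deduce $\Ext^2(F,F)=0$ from $\Ext^2(I_Y,I_Y)=0$, using the defining Hartshorne–Serre sequence
\begin{equation}\label{aux-hs}
0\rightarrow \mathcal{O}_{\mathbb{P}^3}^{\oplus 2}\rightarrow F(1)\rightarrow I_Y(3)\rightarrow 0.
\end{equation}
Since $\Ext^i(F,F)\cong\Ext^i(F(1),F(1))$, it suffices to work with $F(1)$ throughout. The strategy is to apply the functors $\Hom(-,F(1))$ and $\Hom(\mathcal{O}^{\oplus 2},-)$, $\Hom(I_Y(3),-)$ to \eqref{aux-hs} and chase the resulting long exact sequences, controlling the extension groups involving the two building blocks $\mathcal{O}_{\mathbb{P}^3}^{\oplus 2}$ and $I_Y(3)$ so that the only genuinely nonvanishing contribution to $\Ext^2(F(1),F(1))$ comes from $\Ext^2(I_Y(3),I_Y(3))\cong\Ext^2(I_Y,I_Y)$, which is zero by hypothesis.

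First I would apply $\Hom(F(1),-)$ to \eqref{aux-hs}. This reduces the vanishing of $\Ext^2(F(1),F(1))$ to understanding $\Ext^2(F(1),\mathcal{O}_{\mathbb{P}^3}^{\oplus 2})$ and $\Ext^2(F(1),I_Y(3))$, together with the connecting map from $\Ext^1(F(1),I_Y(3))$. I would then dualize: $\Ext^i(F(1),\mathcal{O}_{\mathbb{P}^3})\cong H^i(F(1)^*)\cong H^{3-i}(F(1)(-4))^*$ by Serre duality, and the instanton cohomology vanishing for $F$ (in particular $H^1(F(-2))=H^2(F(-2))=0$ from \eqref{cohoY}, plus $H^0(F(-1))=0$ and the dual $H^3(F(-3))=0$ established in the proof of Theorem \ref{ratinst}) should kill $\Ext^2(F(1),\mathcal{O}_{\mathbb{P}^3}^{\oplus 2})$. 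For the remaining term $\Ext^2(F(1),I_Y(3))$ I would apply $\Hom(-,I_Y(3))$ to \eqref{aux-hs}, splitting it into $\Ext^2(I_Y(3),I_Y(3))$ — which vanishes by hypothesis — and $\Ext^2(\mathcal{O}^{\oplus 2},I_Y(3))=H^2(I_Y(3))^{\oplus 2}$; the latter should vanish since $Y$ is a smooth rational curve lying on enough cubics, and $h^2(I_Y(3))=h^1(\mathcal{O}_Y(3))=0$ as $\deg\mathcal{O}_Y(3)=3(n+3)>2g-2=-2$.

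Assembling these, the long exact sequence for $\Hom(F(1),-)$ gives $\Ext^2(F(1),F(1))$ squeezed between controlled groups, and a parallel application of $\Hom(-,F(1))$ lets me track the connecting homomorphisms so that the sequence degenerates and forces $\Ext^2(F(1),F(1))=0$. The main obstacle I anticipate is bookkeeping the connecting maps: the vanishing of each individual $\Ext^2$ term is necessary but not obviously sufficient, because $\Ext^2(F,F)$ sits in the cokernel of a map out of some $\Ext^1$ and the kernel of a map into some $\Ext^2$. Concretely, one must verify that the relevant $\Ext^1(\mathcal{O}^{\oplus 2},I_Y(3))=H^1(I_Y(3))^{\oplus 2}$ and $\Ext^1(F(1),\mathcal{O}_{\mathbb{P}^3}^{\oplus 2})$ contributions either vanish or map surjectively, so that no nonzero class survives into $\Ext^2(F(1),F(1))$. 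I would handle $H^1(I_Y(3))=0$ via the sequence $0\to I_Y(3)\to\mathcal{O}_{\mathbb{P}^3}(3)\to\mathcal{O}_Y(3)\to 0$ together with $h^1(\mathcal{O}_{\mathbb{P}^3}(3))=0$ and the surjectivity of $H^0(\mathcal{O}_{\mathbb{P}^3}(3))\to H^0(\mathcal{O}_Y(3))$ (cubics cut out a complete linear system on the rational curve $Y$). Once these auxiliary vanishings are in place, the double long-exact-sequence chase closes and yields $\Ext^2(F,F)=\Ext^2(F(1),F(1))=0$.
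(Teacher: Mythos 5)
Your route is genuinely different from the paper's, and its skeleton is sound. The paper twists the Hartshorne--Serre sequence by $\mathcal{O}_{\mathbb{P}^3}(-1)$, applies $\Hom(-,F)$ to get an epimorphism $\Ext^2(I_Y(2),F)\rightarrow\Ext^2(F,F)\rightarrow 0$ (using $H^2(F(1))=0$), then converts $\Ext^2(I_Y(2),F)$ into $\Ext^3(\mathcal{O}_Y,F(-2))$, passes by Serre duality to $\Hom(F(1),\mathcal{O}_Y(-1))\simeq\Hom(I_Y(3),\mathcal{O}_Y(-1))$, and dualizes back to $\Ext^3(\mathcal{O}_Y,I_Y)\simeq\Ext^2(I_Y,I_Y)$. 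You instead squeeze $\Ext^2(F(1),F(1))$ between $\Ext^2(F(1),\mathcal{O}_{\mathbb{P}^3}^{\oplus 2})$ and $\Ext^2(F(1),I_Y(3))$ via $\Hom(F(1),-)$, and then squeeze $\Ext^2(F(1),I_Y(3))$ between $\Ext^2(I_Y(3),I_Y(3))=\Ext^2(I_Y,I_Y)=0$ and $\Ext^2(\mathcal{O}_{\mathbb{P}^3}^{\oplus 2},I_Y(3))=H^2(I_Y(3))^{\oplus 2}=0$ via $\Hom(-,I_Y(3))$. This is more direct and avoids the double Serre-duality step; note also that your worry about connecting maps is unfounded: in each long exact sequence the group you want is flanked by exactly the two $\Ext^2$ terms above, so their vanishing alone closes the argument, with no input from any $\Ext^1$ term.

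Two concrete corrections are needed. First, the vanishing $\Ext^2(F(1),\mathcal{O}_{\mathbb{P}^3})=0$ is the one place where your justification fails: by your own duality formula this group is $H^1(F(-3))^{*}$, and none of the facts you cite ($H^0(F(-1))=H^1(F(-2))=H^2(F(-2))=H^3(F(-3))=0$) gives $H^1(F(-3))=0$. It is true, but you must invoke that $F$ is an instanton, hence a linear sheaf given by a linear monad (or, equivalently, has spectrum $(0,\dots,0)$ by Proposition \ref{instanton1} and Theorem \ref{spectro}), which yields $H^1(F(t))=0$ for all $t\le -2$. Second, your auxiliary claim $H^1(I_Y(3))=0$ is false in general: for $n\ge 4$ one has $h^0(\mathcal{O}_Y(3))=3n+10>20=h^0(\mathcal{O}_{\mathbb{P}^3}(3))$, so the restriction map cannot be surjective and $h^1(I_Y(3))>0$. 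Fortunately, as explained above, that group never enters the argument; delete that step, justify $H^1(F(-3))=0$ as indicated, keep your (correct) computation $H^2(I_Y(3))=0$ from $H^1(\mathcal{O}_Y(3))=0$, and your proof is complete.
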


\begin{proof}
{\color{black}Consider the exact sequence 
\begin{equation}\label{eq1}
\xymatrix{
0 \ar[r] & \mathcal{O}_{\mathbb{P}^3}^{\oplus 2} \ar[r] & F(1) \ar[r] & I_Y(3) \ar[r] & 0,
}
\end{equation} given by Theorem \ref{ratinst} and twist it by} $\mathcal{O}_{\mathbb{P}^3}(-1)$. Applying the functor $\Hom(-,F)$, we get the epimorphism 
\begin{equation}\label{eq6}
\xymatrix{
\cdots \ar[r] & \Ext^2(I_Y(2),F) \ar[r] & \Ext^2(F,F) \ar[r] & 0 
}
\end{equation}
since $F$ is a linear sheaf.

Twisting the exact sequence 
\begin{equation}\label{eq2}
\xymatrix{
0 \ar[r] & I_Y \ar[r] & \mathcal{O}_{\mathbb{P}^3} \ar[r] & \mathcal{O}_{Y} \ar[r] & 0 
}
\end{equation}
by $\mathcal{O}_{\mathbb{P}^3}(2)$ and applying the functor $\Hom(-,F)$, we get the isomorphism 
\begin{equation}\label{eq7}
\Ext^2(I_Y(2),F)\simeq\Ext^3(\mathcal{O}_{Y},F(-2)),
\end{equation}
since $H^2(F(-2))\simeq H^3(F(-2))=0.$

We observe that, by Serre Duality,
\begin{equation}\label{eq8}
\Ext^3(\mathcal{O}_{Y},F(-2))\simeq \Hom(F(1),\mathcal{O}_{Y}(-1))^{\ast} 
\end{equation}
and
\begin{equation}\label{eq9}
\Ext^3(\mathcal{O}_{Y},I_Y)\simeq \Hom(I_Y(3),\mathcal{O}_{Y}(-1))^{\ast}.
\end{equation}

Applying the functor $\Hom(-,\mathcal{O}_{Y}(-1))$ on the exact sequence $\eqref{eq1}$, we get the isomorphism

\begin{equation}\label{eq10}
\Hom(F(1),\mathcal{O}_{Y}(-1))\simeq \Hom(I_Y(3),\mathcal{O}_{Y}(-1)),
\end{equation}
since $H^0(\mathcal{O}_{Y}(-1))=0.$

Referring to $\eqref{eq6}$, $\eqref{eq7}$, $\eqref{eq8}$, $\eqref{eq9}$
and $\eqref{eq10}$
we conclude that if $\Ext^3(\mathcal{O}_{Y},I_Y)=0$, then 
$\Ext^2(I_Y(2),F)=0$ and, therefore, $\Ext^2(F,F)=0$.

Finally, applying the functor  $\Hom(-,I_Y)$ on the
exact sequence $\eqref{eq2}$, we get the isomorphism
\begin{equation}\label{eq11}
\Ext^2(I_Y,I_Y)\simeq \Ext^3(\mathcal{O}_{Y},I_Y),
\end{equation}
since $H^2(I_Y)=H^3(I_Y)=0.$

Therefore, if $\Ext^2(I_Y,I_Y)=0$, then $\Ext^2(F,F)=0$.
\end{proof}

\textcolor{black}{Recall that a rank $2$ instanton bundle on $\mathbb{P}^3$ $F$ is said to be a 't Hooft instanton bundle if it can be obtained as an extension of the form 
$$\xymatrix{0\ar[r] & \mathcal{O}_{\mathbb{P}^3}(-1) \ar[r] & F \ar[r] & I_{\Lambda}(1)\ar[r] & 0}$$
where $\Lambda$ is disjoint union of $n+1$ lines (see \cite[Proposition 7.2]{JMT2018}). In particular, one can see that $H^0(F(1))\geq \rk(F)-1=1$ which motivates the following definition.}

\begin{definition}\label{hooft}
\textcolor{black}{A {\bf generalized 't Hooft instanton bundle} is a rank $r$ instanton bundle $F$ on $\mathbb{P}^3$ such that $h^0(F(1))\geq r-1$.}
\end{definition}

\textcolor{black}{Similar to rank 2 case (see \cite{C1988} and \cite{NT1994}), next we prove that the stable generalized 't Hooft bundles correspond to smooth points in the moduli space of instanton bundles.}

\begin{theorem}\label{smooth}
The stable rank $3$ generalized 't Hooft instanton bundle of charge $n$ is a smooth point in its irreducible component in the moduli space of rank $3$ vector bundles of Chern classes $(c_1,c_2,c_3)=(0,n,0)$.
\end{theorem}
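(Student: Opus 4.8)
The plan is to show that a stable rank $3$ generalized 't Hooft instanton bundle $F$ of charge $n$ is a smooth point by proving that $\Ext^2(F,F)=0$, which by Corollary \ref{ext} forces $\dim\Ext^1(F,F)=12n-8$ to equal the dimension of the moduli space locally, so that the obstruction space vanishes and the point is smooth. By Lemma \ref{lem01}, the vanishing $\Ext^2(F,F)=0$ is reduced to establishing $\Ext^2(I_Y,I_Y)=0$, where $Y$ is the nondegenerate smooth rational curve of degree $n+3$ associated to $F$ via the generalized 't Hooft hypothesis $h^0(F(1))\geq 2$ together with Proposition \ref{instrat}. So the entire theorem hinges on a cohomological vanishing statement about the ideal sheaf of $Y$.

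First I would set up the identification of $F$ with a curve. Since $F$ is a generalized 't Hooft instanton bundle we have $h^0(F(1))\geq r-1 = 2$, and $F$ has no global sections (being an instanton with $c_1=0$, $h^0(F)=h^0(F(-1))=0$ in fact $h^0(F)=0$ by semistability). Thus Proposition \ref{instrat} applies and produces a rational curve $Y\subset\mathbb{P}^3$ of degree $n+3$ fitting into
\begin{equation*}
0 \rightarrow \mathcal{O}_{\mathbb{P}^3}^{\oplus 2} \rightarrow F(1) \rightarrow I_Y(3) \rightarrow 0.
\end{equation*}
For the generic such bundle one may take $Y$ to be a smooth nondegenerate rational curve, which is exactly the input for Lemma \ref{lem01}.

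Next I would prove $\Ext^2(I_Y,I_Y)=0$ for the smooth rational curve $Y$. The natural approach is to apply $\Hom(I_Y,-)$ (or $\Hom(-,I_Y)$) to the structure sequence \eqref{eq2} and analyze the resulting terms using the local-to-global spectral sequence relating $\Ext^p(I_Y,I_Y)$ to $H^q(\lext^{p-q}(I_Y,I_Y))$. Since $Y$ is a smooth (hence l.c.i.) curve, the sheaf $\lext^1(I_Y,I_Y)$ is supported on $Y$ and expressible through the normal bundle $N_{Y/\mathbb{P}^3}$, while $\lext^{\geq 2}$ vanishes. The computation then decomposes into global cohomology of $\mathcal{O}_{\mathbb{P}^3}$-type terms (which vanish in the relevant degrees) and $H^1(N_{Y/\mathbb{P}^3})$-type terms. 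Because $Y$ is rational and $N_{Y/\mathbb{P}^3}=\mathcal{O}_Y((2n+5)pt)^{\oplus 2}$ has positive degree on $\mathbb{P}^1$, we get $H^1(N_{Y/\mathbb{P}^3})=0$, which is the key vanishing. I would also use $\Ext^2(I_Y,I_Y)\cong\Ext^2(I_Y,\mathcal{O}_{\mathbb{P}^3})$ modulo the already-vanishing curve cohomology, reducing everything to $H^1(N_{Y/\mathbb{P}^3})=0$ and the linearity vanishings \eqref{cohoY}.

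The main obstacle is the precise bookkeeping of the spectral sequence and the deformation-theoretic identification: one must verify that the deformations of $Y$ (governed by $H^0(N_{Y/\mathbb{P}^3})$) are unobstructed, i.e. $H^1(N_{Y/\mathbb{P}^3})=0$, and then transport this unobstructedness through the Hartshorne-Serre correspondence and through the chain of isomorphisms \eqref{eq6}--\eqref{eq11} in Lemma \ref{lem01} to conclude $\Ext^2(F,F)=0$. The subtle point is that \emph{genericity} enters: the hypothesis $\Ext^2(I_Y,I_Y)=0$ in Lemma \ref{lem01} holds for a \emph{generic} smooth rational curve of degree $n+3$, and one must argue that the generalized 't Hooft locus contains such curves, so that the smooth point statement is about the generic member (and hence an open, dense, smooth locus) of the irreducible component. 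Once $\Ext^2(F,F)=0$ is secured, smoothness of $F$ in the moduli space is immediate since $\Ext^2(F,F)$ is the obstruction space for deformations of $F$.
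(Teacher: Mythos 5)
Your proposal follows essentially the same route as the paper: via Proposition \ref{instrat} and a genericity argument the bundle corresponds to a smooth nondegenerate rational curve $Y$ of degree $n+3$, one establishes $H^1(N_{Y/\mathbb{P}^3})=0$, deduces $\Ext^2(I_Y,I_Y)=0$, and concludes $\Ext^2(F,F)=0$ by Lemma \ref{lem01}. The only difference is that where you sketch a direct local-to-global spectral sequence computation of $\Ext^2(I_Y,I_Y)$ (and read off $H^1(N_{Y/\mathbb{P}^3})=0$ from the positivity of the normal bundle), the paper obtains the same two vanishings by citation, namely \cite[Theorem 3.1.4 and Lemma 3.1.5]{L2000} for $H^1(N_{Y/\mathbb{P}^3})=0$ and \cite[Lemma B.5.6]{KPS2018} for the passage to $\Ext^2(I_Y,I_Y)=0$.
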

\begin{proof}
\textcolor{black}{Let $F$ be a stable rank $3$ generalized 't Hooft instanton bundle of charge $n$, by Proposition \ref{instrat} $F$ corresponds to a rational curve $Y\subset \mathbb{P}^3$ of degree $n+3$. Since smoothness and irreducibility are open conditions in its Hilbert scheme of rational curves of fixed degree, we have that the generic choice of sections of $H^0(F(1))$ will be associated with a nondegenerate smooth irreducible curve $Y$, which is smooth in that Hilbert scheme by \cite[Theorem 3.1.4 and Lemma 3.1.5]{L2000} with $H^1(N_{Y/\mathbb{P}^3}) = 0$, hence by \cite[Lemma B.5.6]{KPS2018}, we have that $Ext^2(I_Y,I_Y)=0$, from what follows that $\Ext^2(F,F)=0$. Therefore, $F$ is a smooth point in its irreducible component of the moduli space.}
\end{proof}

\subsection{Rank 3 instanton bundles of charge $2$ and rank 2 reflexive sheaves} The main goal of this section is to relate stable rank $3$ instanton bundles on ${\mathbb P}^3$ of charge $2$ with stable rank $2$ reflexive linear sheaves with Chern classes $(c_{1},c_{2},c_{3})=(1,3,3)$. Then we use this result to prove that the family of rank $3$ stable instanton bundles of charge $2$ has dimension 16. 

\begin{theorem} \label{equivalence} Let $Y \subset \mathbb{P}^{3}$ be \textcolor{black}{an irreducible} curve that satisfies the hypothesis of the \textcolor{black}{Hartshorne-Serre Correspondence}. Given a rank $3$ coherent sheaf $F$ on $\mathbb{P}^3$, then the following conditions are equivalent:
\begin{itemize}
    \item[1.] $F$ is a vector bundle which corresponds to $Y$ via the \textcolor{black}{Hartshorne-Serre Correspondence};
    \item[2.] $F \in \Ext^1(E,\mathcal{O}_{\mathbb{P}^3})$, where $E$ is a rank 2 reflexive sheaf which corresponds to $Y$ via the \textcolor{black}{Hartshorne-Serre Correspondence}.
    \end{itemize}
\end{theorem}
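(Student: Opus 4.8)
The plan is to recognize both conditions as two readings of the same pair of Hartshorne--Serre sequences and to pass between them by the snake lemma, the only genuinely nontrivial point being local freeness in one direction. Write $k=c_1(F)$. Condition (1) says that $F$ is locally free and sits in
$$0\to \mathcal{O}_{\mathbb{P}^3}^{\oplus 2}\to F\to I_Y(k)\to 0,$$
whereas the rank $2$ reflexive sheaf $E$ attached to $Y$ sits in
$$0\to \mathcal{O}_{\mathbb{P}^3}\to E\to I_Y(k)\to 0.$$
First I would note that $Y$ does satisfy the hypotheses of the reflexive version of the correspondence: Theorem \ref{serre} (with $r=3$) asks that $\det N_{Y/\mathbb{P}^3}\otimes\mathcal{O}_Y(-k)$ be generated by two global sections, which in particular gives it a section, and a single section is all that \cite[Theorem 4.1]{H1980b} needs to produce the reflexive sheaf $E$; moreover $E$ is reflexive because $Y$ is a locally complete intersection curve, hence pure of codimension $2$.

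For (1) $\Rightarrow$ (2) I would take the inclusion $\mathcal{O}_{\mathbb{P}^3}\hookrightarrow\mathcal{O}_{\mathbb{P}^3}^{\oplus 2}$ onto the first summand and set $E:=\coker(\mathcal{O}_{\mathbb{P}^3}\to F)$. Feeding the inclusion of the constant subsheaf into the rank $3$ sequence and running the snake lemma yields a commutative diagram whose bottom row is precisely $0\to\mathcal{O}_{\mathbb{P}^3}\to E\to I_Y(k)\to 0$, so that $E$ is the reflexive sheaf corresponding to $Y$, while its middle column $0\to\mathcal{O}_{\mathbb{P}^3}\to F\to E\to 0$ exhibits $F$ as a class in $\Ext^1(E,\mathcal{O}_{\mathbb{P}^3})$. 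This direction is essentially formal.

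For (2) $\Rightarrow$ (1) I would start from $0\to\mathcal{O}_{\mathbb{P}^3}\to F\to E\to 0$ and compose the surjection $F\twoheadrightarrow E$ with $E\twoheadrightarrow I_Y(k)$. The composite is surjective and its kernel is an extension of $\ker(E\to I_Y(k))=\mathcal{O}_{\mathbb{P}^3}$ by $\ker(F\to E)=\mathcal{O}_{\mathbb{P}^3}$; since $\Ext^1(\mathcal{O}_{\mathbb{P}^3},\mathcal{O}_{\mathbb{P}^3})=H^1(\mathcal{O}_{\mathbb{P}^3})=0$ this extension splits, so the kernel is $\mathcal{O}_{\mathbb{P}^3}^{\oplus 2}$ and $F$ sits in $0\to\mathcal{O}_{\mathbb{P}^3}^{\oplus 2}\to F\to I_Y(k)\to 0$. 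Thus $F$ has the right shape; what remains --- and this is the hard part --- is to show that $F$ is actually \emph{locally free}, as condition (1) demands, and not merely reflexive (it is reflexive as an extension of the reflexive sheaf $E$ by the locally free $\mathcal{O}_{\mathbb{P}^3}$).

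To handle local freeness I would dualize. Applying $\lhom(-,\mathcal{O}_{\mathbb{P}^3})$ to $0\to\mathcal{O}_{\mathbb{P}^3}\to F\to E\to 0$ produces the connecting map $\mathcal{O}_{\mathbb{P}^3}=\lhom(\mathcal{O}_{\mathbb{P}^3},\mathcal{O}_{\mathbb{P}^3})\to\lext^1(E,\mathcal{O}_{\mathbb{P}^3})$ induced by the extension class, together with isomorphisms $\lext^i(E,\mathcal{O}_{\mathbb{P}^3})\cong\lext^i(F,\mathcal{O}_{\mathbb{P}^3})$ for $i\geq 2$. Since $E$ is reflexive of rank $2$ on a smooth threefold, $\lext^{\geq 2}(E,\mathcal{O}_{\mathbb{P}^3})=0$, so the reflexive rank $3$ sheaf $F$ is locally free if and only if $\lext^1(F,\mathcal{O}_{\mathbb{P}^3})=0$, i.e.\ if and only if the connecting map is surjective onto the skyscraper sheaf $\lext^1(E,\mathcal{O}_{\mathbb{P}^3})$ supported at the finitely many points where $E$ is not locally free. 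The crux I anticipate is to show that this surjectivity holds exactly when the extension resolves each of these point singularities, which geometrically means that the second Hartshorne--Serre section does not vanish where the section cutting out $E$ does --- equivalently, that $\det N_{Y/\mathbb{P}^3}\otimes\mathcal{O}_Y(-k)$ is globally generated by the two sections of Theorem \ref{serre}. Establishing this equivalence, and hence identifying precisely which classes $\xi\in\Ext^1(E,\mathcal{O}_{\mathbb{P}^3})$ give a locally free $F$, is where the real work lies; granting it, the two snake-lemma computations close the argument in both directions.
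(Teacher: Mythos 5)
Your architecture coincides with the paper's (snake lemma in both directions, then dualize to control $\mathcal{E}xt^1$), but the proposal leaves unproven exactly the two steps that carry the content. First, in (1)$\Rightarrow$(2) you assert that $E=\coker(\mathcal{O}_{\mathbb{P}^3}\to F)$ ``is the reflexive sheaf corresponding to $Y$,'' justified earlier by ``$Y$ is a locally complete intersection, hence pure of codimension $2$.'' That justification is not correct: reflexivity of an extension $0\to\mathcal{O}_{\mathbb{P}^3}\to E\to I_Y(c)\to 0$ depends on the extension class, not only on $Y$ --- the trivial extension $\mathcal{O}_{\mathbb{P}^3}\oplus I_Y(c)$ is never reflexive, since $I_Y(c)^{\ast\ast}=\mathcal{O}_{\mathbb{P}^3}(c)\neq I_Y(c)$. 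The paper actually proves reflexivity at this point: dualizing the sequence for $E$ gives $\mathcal{O}_{\mathbb{P}^3}\xrightarrow{\ s\ }\mathcal{E}xt^1(I_Y(c),\mathcal{O}_{\mathbb{P}^3})\simeq\omega_Y(4-c)\to\mathcal{E}xt^1(E,\mathcal{O}_{\mathbb{P}^3})\to 0$, and since $Y$ is irreducible (this is precisely why irreducibility is in the hypotheses) either $s=0$ --- excluded because then $E\simeq\mathcal{O}_{\mathbb{P}^3}\oplus I_Y(c)$ and $F$ could not be a vector bundle --- or $s$ vanishes at finitely many points, so $\mathcal{E}xt^1(E,\mathcal{O}_{\mathbb{P}^3})$ has finite support and $E$ is reflexive.

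Second, and more seriously, in (2)$\Rightarrow$(1) you stop at the decisive step: you correctly reduce local freeness of $F$ to surjectivity of the connecting map onto the skyscraper $\mathcal{E}xt^1(E,\mathcal{O}_{\mathbb{P}^3})$, but then declare that determining when this holds ``is where the real work lies,'' even suggesting that only certain classes $\xi$ will do. Since the theorem asserts the implication for \emph{every} $F\in\Ext^1(E,\mathcal{O}_{\mathbb{P}^3})$, leaving this open means the statement is not proved. The paper closes the gap as follows: it first shows $F$ is reflexive (as you do), then dualizes the four-term sequence $0\to\mathcal{O}_{\mathbb{P}^3}^{\oplus 2}\to F\to I_Y(c)\to 0$ to obtain $\mathcal{O}_{\mathbb{P}^3}^{\oplus 2}\xrightarrow{\ \lambda\ }\omega_Y(4-c)\to\mathcal{E}xt^1(F,\mathcal{O}_{\mathbb{P}^3})\to 0$, and argues, using the hypothesis that $\omega_Y(4-c)$ is generated by two global sections, that $\lambda$ is either surjective (so $\mathcal{E}xt^1(F,\mathcal{O}_{\mathbb{P}^3})=0$ and the reflexive sheaf $F$ is locally free) or zero (in which case $F^{\ast}\simeq\mathcal{O}_{\mathbb{P}^3}(-c)\oplus\mathcal{O}_{\mathbb{P}^3}^{\oplus 2}$, hence $F=F^{\ast\ast}$ is again a bundle). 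Whatever one thinks of that dichotomy, your write-up must supply \emph{some} argument at this point rather than flag it as future work; as it stands you have proved the formal halves of both implications and postponed precisely the parts the theorem is about.
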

\begin{proof}
Let $F$ be a rank 3 vector bundle which corresponds to $Y$ via the \textcolor{black}{Hartshorne-Serre Correspondence}, take $\alpha, \beta \in H^0(F)$ and consider the following commutative diagram:

$$\xymatrix{0 \ar[r] & {\mathcal{O}_{\mathbb{P}^3}}\ar[d]^{i}\ar[r]^{\alpha} &F\ar@{=}[d]\\0 \ar[r] & {\mathcal{O}_{\mathbb{P}^3}^{\oplus 2}}\ar[r]_{(\alpha, \beta)} &
F\ar[r] & I_{Y}(c)\ar[r]& 0,}$$

\noindent where $i:\mathcal{O}_{\mathbb{P}^3}\rightarrow \mathcal{O}_{\mathbb{P}^3}^{\oplus 2}$ is the inclusion on the first summand \textcolor{black}{and $c:= c_1(F)$}. By the \textcolor{black}{snake lemma} we get the exact sequence

\begin{equation} \label{seqE}
\xymatrix{0\ar[r] &{\mathcal{O}_{\mathbb{P}^3}} \ar[r]^{\alpha} & E \ar[r] & I_{Y}(c) \ar[r] & 0, }
\end{equation}

\noindent where $E=\coker(\alpha)$ \textcolor{black}{and that $F \in \Ext^1(E,\mathcal{O}_{\mathbb{P}^3})$}. One can prove that $E$ is a reflexive sheaf. \textcolor{black}{Applying $\mathcal{H}om( -, \mathcal{O}_{\mathbb{P}^3})$ in sequence (13) we have
$$\xymatrix{0\ar[r] & \mathcal{H}om(I_Y(c), \mathcal{O}_{\mathbb{P}^3})\ar[r] & E^{\ast} \ar[r] &\mathcal{O}_{\mathbb{P}^3} \ar[r]^-{s} & \mathcal{E}xt^1(I_Y(c), \mathcal{O}_{\mathbb{P}^3})\ar[r] & \mathcal{E}xt^1(E, \mathcal{O}_{\mathbb{P}^3})\ar[r] &0.}$$
Note that $s\in H^0(\omega_Y(4-c))\simeq Ext^1(I_Y(c), \mathcal{O}_{\mathbb{P}^3})$. Since $Y$ is irreducible, then either $s=0$, therefore the extension is trivial and consequently $F$ is not a vector bundle, a contradiction; or $s$ vanishes in a finite number of points, hence $\dim \mathcal{E}xt^1(E, \mathcal{O}_{\mathbb{P}^3})=0$ and $E$ is reflexive.}

On the other hand, let $E$ be a rank 2 reflexive sheaf which corresponds to $Y$ and let $F \in \Ext^1(E,\mathcal{O}_{\mathbb{P}^3})$. Then we have the following commutative diagram

$$\xymatrix{& &0 \ar[d] &0\ar[d]\\ & &0\ar[d] \ar[r] &{\mathcal{O}_{\mathbb{P}^3}}\ar[d]\\ 0 \ar[r] & {\mathcal{O}_{\mathbb{P}^3}} \ar[r] & F \ar@{=}[d]\ar[r]^{p} & E \ar[d]^{s}\ar[r] & 0\\ & & F \ar[d] \ar[r]^-{s\circ p} & I_{Y}(c)\ar[d]\ar[r]& 0\\& &  0 & 0 }$$

By the \textcolor{black}{snake lemma} we get the short exact sequence

$$0 \rightarrow {\mathcal{O}_{\mathbb{P}^3}} \rightarrow \ker{(s\circ p)} \rightarrow {\mathcal{O}_{\mathbb{P}^3}} \rightarrow  0,$$ 

\noindent thus we have $\ker{(s\circ p)} \simeq  \mathcal{O}_{\mathbb{P}^3}^{\oplus 2}$ and the short exact sequence

\begin{equation}\label{seqF}
0 \rightarrow \mathcal{O}_{\mathbb{P}^3}^{\oplus 2} \rightarrow F \rightarrow I_{Y}(c) \rightarrow 0.    
\end{equation}

Finally, we need to prove that $F$ is locally free. First note that $F$ is a reflexive sheaf, indeed, dualizing the sequence 
$$0 \rightarrow {\mathcal{O}_{\mathbb{P}^3}} \rightarrow F \rightarrow E \rightarrow 0,$$ 
\noindent we get $\mathcal{E}xt^2(F,\mathcal{O}_{\mathbb{P}^3})=\mathcal{E}xt^3(F,\mathcal{O}_{\mathbb{P}^3})=0$ and the surjective map $\xymatrix{\mathcal{E}xt^1(E,\mathcal{O}_{\mathbb{P}^3})\ar[r]^{\delta}& \mathcal{E}xt^1(F,\mathcal{O}_{\mathbb{P}^3})}$. Since $E$ is reflexive it follows that $\mathcal{E}xt^1(E,\mathcal{O}_{\mathbb{P}^3})$ and thus $\mathcal{E}xt^1(F,\mathcal{O}_{\mathbb{P}^3})$ is supported in a finite number of points. Therefore, $F$ is a reflexive sheaf.
Now dualizing \eqref{seqF} we get
$$\xymatrix{\mathcal{O}_{\mathbb{P}^3}^{\oplus 2}\ar[r]^-{\lambda}&\mathcal{E}xt^1(I_Y(c),\mathcal{O}_{\mathbb{P}^3})\ar[r]& \mathcal{E}xt^1(F,\mathcal{O}_{\mathbb{P}^3})\ar[r]&0}.$$

Since $\mathcal{E}xt^1(I_Y(c),\mathcal{O}_{\mathbb{P}^3})\simeq \omega_Y(4-c)$, and by hypothesis, $\omega_Y(4-c)$ is globally generated by 2 sections, it is straightforward to check that $\lambda$ is either surjective (and thus $\mathcal{E}xt^1(F,\mathcal{O}_{\mathbb{P}^3}) = 0$) or $\lambda = 0$, and thus $F^{\ast} = \mathcal{O}_{\mathbb{P}^3}(-c)\oplus\mathcal{O}_{\mathbb{P}^3}^{\oplus 2}$, that is, $F = F^{\ast \ast  } = \mathcal{O}_{\mathbb{P}^3}(c)\oplus\mathcal{O}_{\mathbb{P}^3}^{\oplus 2}$. In both cases, $F$ is a vector bundle as we wanted. 

\end{proof}

\begin{theorem}\label{correspondence1}There exists a one to one correspondence between:
\begin{itemize}
\item[1.] Pairs $(F, \sigma)$, where $F$ is a charge $2$ rank $3$ stable instanton bundle on $\mathbb{P}^3$ and $\sigma$ is a global section of $F(1)$. 

\item[2.] Pairs $(E,\xi)$ where \textcolor{black}{$E$ is rank $2$ stable reflexive sheaf} on $\mathbb{P}^3$ with Chern classes $(-1,3,3)$ and $\xi \in \Ext^{1}(E(2),\mathcal{O}_{\mathbb{P}^{3}})$.
\end{itemize}
\end{theorem}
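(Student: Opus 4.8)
The plan is to construct two mutually inverse assignments, using Theorem \ref{equivalence} as the bridge between rank $3$ bundles and rank $2$ reflexive sheaves and reading off the section/extension--class data directly from the defining short exact sequences. I work throughout with the twist $F(1)$, whose Chern classes are $(3,5,3)$ by the computation in Proposition \ref{instrat}.

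\emph{Forward map $(F,\sigma)\mapsto(E,\xi)$.} A nonzero $\sigma\in H^0(F(1))$ is an injection $\mathcal{O}_{\mathbb{P}^3}\to F(1)$. Since $F$ is stable, $F(1)$ is stable of slope $1$, so $\sigma$ cannot vanish on a divisor and its image is a saturated copy of $\mathcal{O}_{\mathbb{P}^3}$; I put $E':=\coker(\sigma)$ and $E:=E'(-2)$. The sequence $0\to\mathcal{O}_{\mathbb{P}^3}\to F(1)\to E'\to 0$ forces $c(E)=(-1,3,3)$ and yields the class $\xi\in\Ext^1(E',\mathcal{O}_{\mathbb{P}^3})=\Ext^1(E(2),\mathcal{O}_{\mathbb{P}^3})$. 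I then check that $E$ lands in the target category: stability of $E$ follows by pulling an alleged destabilizing sub-line bundle $\mathcal{O}_{\mathbb{P}^3}(a)\hookrightarrow E'$ back along $F(1)\twoheadrightarrow E'$ to a rank $2$ subsheaf of $F(1)$ of first Chern class $a$ and contradicting the stability of $F(1)$; that $E$ is a linear sheaf is a cohomology chase through the twisted sequences, using that $F$ is an instanton (so its shifted cohomology is controlled by Proposition \ref{instanton1}) and that $H^0(E)=0$; reflexivity of $E$ is delegated to Theorem \ref{equivalence}, after identifying $E(2)$ as the rank $2$ Hartshorne--Serre sheaf of the curve $Y$ attached to $F(1)$.

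\emph{Backward map $(E,\xi)\mapsto(F,\sigma)$.} The class $\xi$ defines $0\to\mathcal{O}_{\mathbb{P}^3}\to G\to E(2)\to 0$ with $\rk G=3$; I set $F:=G(-1)$ and let $\sigma$ be the image of $1\in H^0(\mathcal{O}_{\mathbb{P}^3})$ in $H^0(G)=H^0(F(1))$. First I produce the curve: a section of $E(2)$ (which exists by a Riemann--Roch computation, indeed $\chi(E(2))=5$) gives, via the rank $2$ reflexive Hartshorne--Serre correspondence, a sequence $0\to\mathcal{O}_{\mathbb{P}^3}\to E(2)\to I_Y(3)\to 0$ with $\deg Y=c_2(E(2))=5$; a direct computation forces $g(Y)=0$, and one checks that $Y$ is a nondegenerate rational curve, so $\omega_Y(1)$ is globally generated by two sections exactly as in Theorem \ref{ratinst} and the Hartshorne--Serre hypotheses hold. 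Theorem \ref{equivalence}$(2\Rightarrow 1)$ then shows $G$ is locally free, whence $F$ is a rank $3$ bundle; that $F$ is a charge $2$ instanton follows by translating the linear--sheaf vanishings of $E$ through $0\to\mathcal{O}_{\mathbb{P}^3}\to F(1)\to E(2)\to 0$ and invoking Proposition \ref{instanton1}, and stability of $F$ is again a slope argument on this sequence. Finally, the two assignments are mutually inverse essentially by construction, since each records the single datum of the extension $0\to\mathcal{O}_{\mathbb{P}^3}\to F(1)\to E(2)\to 0$ together with its canonical sub-line bundle.

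\emph{Main obstacle.} The delicate point is that both constructions must respect the dichotomy between locally free and merely reflexive sheaves: in the forward direction the cokernel $E'$ has to be reflexive (equivalently, the zero scheme of $\sigma$ must be $0$-dimensional), while in the backward direction the extension $G$ has to be locally free (equivalently, $\xi$ must generate $\mathcal{E}xt^1(E(2),\mathcal{O}_{\mathbb{P}^3})$ at each of its finitely many support points). These conditions are dual, and treating them by bare hands would require controlling the local structure of $\sigma$ and of $\xi$ point by point. The efficient route, which I would follow, is to channel both through the common curve $Y$ and invoke Theorem \ref{equivalence}: the Hartshorne--Serre hypothesis that $\omega_Y(1)$ is globally generated by two sections is precisely what simultaneously guarantees reflexivity on the rank $2$ side and local freeness on the rank $3$ side. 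Once this is in place, the preservation of stability and of the instanton/linear cohomology vanishings is routine bookkeeping.
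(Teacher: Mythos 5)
Your overall architecture matches the paper's: both directions are routed through the rational quintic $Y$ and Theorem \ref{equivalence}, the section/extension data is read off the sequence $0\to\mathcal{O}_{\mathbb{P}^3}\to F(1)\to E(2)\to 0$, and stability of $F$ in the backward direction is a slope argument on that sequence (the paper then quotes \cite[Lemma 1.2.12]{OSS1980}). The genuine gap is in your backward direction, at ``a direct computation forces $g(Y)=0$, and one checks that $Y$ is a nondegenerate rational curve.'' From a section of $E(2)$ you only get a degree $5$ curve with $p_a(Y)=0$ (indeed $2p_a-2=c_1c_2-4c_2+c_3=-2$), and $p_a=0$ does not give irreducibility: a connected nodal union of a line and a quartic also has $p_a=0$. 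Irreducibility is exactly the hypothesis of Theorem \ref{equivalence}, and it is also what entitles you to call $Y$ rational and to assert that $\omega_Y(1)$ is globally generated by two sections. Moreover you have not produced a suitable section at all: $\chi(E(2))=5$ yields $h^0(E(2))>0$ only after controlling $h^2(E(2))$, and nothing in your argument selects a section whose zero scheme is a locally complete intersection irreducible curve. This is precisely the input the paper takes from Chang \cite[Theorem 3.13]{C1984} (with its cohomology table): every stable reflexive sheaf with $(c_1,c_2,c_3)=(-1,3,3)$ corresponds to a rational quintic via the rank $2$ Hartshorne--Serre correspondence. Without citing this, or proving an equivalent statement, your backward map is not constructed.

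Two smaller points. In the forward direction you pull a destabilizing $\mathcal{O}_{\mathbb{P}^3}(a)\subset E(2)$ (so $a\geq 2$) back to a rank $2$ subsheaf of $F(1)$ with $c_1=a$ and ``contradict the stability of $F(1)$''; but the paper's standing convention is Gieseker stability, which only gives slope semistability of $F(1)$, so the borderline case $a=2$, i.e.\ $\mathcal{O}_{\mathbb{P}^3}\hookrightarrow E$, is not excluded by slopes alone --- you need the extra observation $H^0(E)=0$, which does follow from $H^0(F(-1))=0$ via the defining sequence (the paper sidesteps this by quoting \cite[Proposition 4.2]{H1980b}). Second, obtaining the instanton vanishings for $F$ by ``invoking Proposition \ref{instanton1}'' is not routine: that proposition presupposes knowledge of the spectrum, and a direct chase through the extension does not give $h^3(F(-3))=0$ for free since $h^3(\mathcal{O}_{\mathbb{P}^3}(-4))\neq 0$. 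Once you know $F(1)$ is the Hartshorne--Serre bundle of a nondegenerate smooth rational quintic, the clean route is the paper's: quote Theorem \ref{ratinst}.
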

\begin{proof}

Given $(F, \sigma)$, where $F$ is a charge $2$ rank $3$ stable instanton bundle on $\mathbb{P}^3$ and $\sigma \in H^0(F(1))$. Since $6=\chi(F(1))=h^0((F(1))-h^1((F(1))$ it follows that $h^0((F(1))\geq 6$, and by Proposition \ref{instrat} $F(1)$ corresponds to a rational quintic $Y\subset \mathbb{P}^3$ via the \textcolor{black}{Hartshorne-Serre Correspondence}.\\

By Theorem \ref{equivalence}, $F(1)\in \Ext^1(N, \mathcal{O}_{\mathbb{P}^3})$, where $N$ is a rank 2 reflexive sheaf \textcolor{black}{with Chern classes $(3,5,3)$} which also corresponds to $Y$ via \textcolor{black}{Hartshorne-Serre Correspondence}. By \cite[Proposition 4.2]{H1980b} $N$ is stable. Now let $E:=N_{norm}=N(-2)$, thus we have that the Chern classes of $E$ are $(-1,3,3)$ and $F(1) \in \Ext^{1}(E(2),\mathcal{O}_{\mathbb{P}^{3}})$.

On the other hand, let $(E,\xi)$, where $E$ is normalized rank $2$ stable reflexive sheaf on $\mathbb{P}^3$ with Chern classes $(-1,3,3)$ and $\xi \in \Ext^{1}(E(2),\mathcal{O}_{\mathbb{P}^{3}})$. By \cite[Theorem 3.13]{C1984}, $E(2)$ corresponds to a rational quintic $Y\subset \mathbb{P}^3$ via \textcolor{black}{Hartshorne-Serre Correspondence}. Thus by Theorem \ref{equivalence} $\xi$ defines a rank 3 vector bundle $G$ on $\mathbb{P}^3$ with Chern classes $(1,3,3)$. Let $F:=G_{norm}=G(-1)$, by Theorem \ref{ratinst} $F$ is a instanton bundle, and $\sigma$ is defined by the map $\mathcal{O}_{\mathbb{P}^3}\to F(1)$ in the extension

\begin{equation}\label{eq90}
    \xymatrix{0\ar[r]& \mathcal{O}_{\mathbb{P}^3}\ar[r]^-{\sigma}& F(1)\ar[r]^{\beta} & E(2)\ar[r]& 0}
\end{equation}

Next we prove that $F$ is slope stable, which would imply that $F$ is stable by \cite[Lemma 1.2.12]{OSS1980}. The sequence \eqref{eq90} gives us $H^{0}(F) = 0$, that is, there is no destabilizing coherent subsheaf of rank $1$. For the rank $2$ case, assume $\phi : F \rightarrow L$ a surjection where $L$ is a \textcolor{black}{rank $1$ torsion free sheaf} such that ${c_{1}}(L) \leq 0$. If ${\phi|}_{\mathcal{O}_{\mathbb{P}^{3}}(-1)} = 0$ then there exists a map \textcolor{black}{$u: E(1) \rightarrow L$ such that $\phi = u \circ \beta$. Hence $u: E(1) \rightarrow L$ is a surjective map and ${c_{1}}(L) \leq 0 < \frac{1}{2}$ which contradicts the stability of $E$}. Thus the sheaf $\mathcal{O}_{\mathbb{P}^{3}}(-1)$ is injected onto $L$ and we have $L \cong \mathcal{O}_{\mathbb{P}^{3}}(-1)$ so the sequence \eqref{eq90} splits which is a contradiction. This concludes the proof.
\end{proof}

In what follows we use this result to prove that every rank $3$ instanton bundle of charge 2 has natural cohomology.

\begin{definition}
We say that a vector bundle $F$ on $\mathbb{P}^3$ has \textit{natural cohomology} if for
any integer $t$ at most one cohomology group $H^i(F(t))$ is nontrivial for $0\leq i\leq 3$.
\end{definition}

\begin{proposition}\label{natural}
Every rank 3 stable instanton bundle on $\mathbb{P}^3$ with charge 2 has natural cohomology. 
\end{proposition}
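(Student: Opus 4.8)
The plan is to show that for a rank $3$ stable instanton bundle $F$ of charge $2$, the cohomology table is determined entirely, so that at most one $H^i(F(t))$ is nonzero for each twist $t$. Since $F$ is a normalized stable bundle, we immediately know $h^0(F(t))=0$ for $t\le 0$ and, by Serre duality together with stability of $F^\ast\cong F(0)$ (here $c_1=0$), $h^3(F(t))=h^0(F(-4-t))^\ast=0$ for $t\ge -3$. The instanton conditions give the middle vanishings $h^1(F(-2))=h^2(F(-2))=0$, and by Proposition \ref{instanton1} the spectrum is $k_F=(0,0)$. I would use Theorem \ref{spectro} to compute $h^1(F(l))$ for $l\le -1$ and $h^2(F(l))$ for $l\ge -2$ directly from the spectrum, obtaining $h^1(F(l))=\sum_{i=1}^{2}h^0(\mathcal O_{\mathbb P^1}(l+1))=2\,h^0(\mathcal O_{\mathbb P^1}(l+1))$ and $h^2(F(l))=2\,h^1(\mathcal O_{\mathbb P^1}(l+1))$.

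The key observation is that $h^0(\mathcal O_{\mathbb P^1}(m))$ and $h^1(\mathcal O_{\mathbb P^1}(m))$ are never simultaneously nonzero: $h^0=m+1$ for $m\ge 0$ and $h^1=-m-1$ for $m\le -2$, with both vanishing at $m=-1$. Hence from the spectrum formulas, $h^1(F(l))\ne 0$ forces $l+1\ge 0$, i.e. $l\ge -1$, while $h^2(F(l))\ne 0$ forces $l+1\le -2$, i.e. $l\le -3$. These two ranges are disjoint, so $H^1(F(l))$ and $H^2(F(l))$ are never both nonzero. First I would assemble the full cohomology table twist by twist: for $t\le -1$ only $H^1$ can survive (among $H^1,H^2$, with $H^0=H^3=0$ in the relevant range), and for $t\ge -3$ only $H^2$ can survive; the overlap region $-3\le t\le -1$ is handled by the disjointness just noted, and one checks the boundary twists $t=-1$ and $t=-3$ explicitly from the $\mathbb P^1$-cohomology formulas. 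Combining with the already-established vanishing of $H^0$ for $t\le 0$ and $H^3$ for $t\ge -3$ then pins down that at most one group is nonzero for every $t$.

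The main obstacle is bookkeeping at the boundary twists where Theorem \ref{spectro} applies with equality and where the $H^0/H^3$ vanishings must be combined carefully with the $H^1/H^2$ computation; one must verify that at $t=0$ (where $H^0$ could in principle appear) the section space $H^0(F)=0$ by stability and that $H^1(F)$, $H^2(F)$ are not both nonzero. I expect no genuine difficulty beyond this: the Euler characteristic computed via Proposition \ref{chernclass}-type data (or directly from Riemann--Roch) cross-checks the individual dimensions, and the disjointness of the $\mathbb P^1$-cohomology ranges does the essential work. I would close by recording the resulting cohomology table, which exhibits natural cohomology, completing the proof.
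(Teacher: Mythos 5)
Your proposal has a genuine gap: it treats the formulas of Theorem \ref{spectro} as if they held for every twist, but they are only valid in restricted ranges. With spectrum $(0,0)$ and generic splitting type $(0,0,0)$, the theorem computes $h^1(F(l))$ only for sufficiently negative $l$ (roughly $l\le -1$) and $h^2(F(l))$ only for $l\ge -3$; outside these ranges the spectrum says nothing. So your key step ``$h^1(F(l))\ne 0$ forces $l\ge -1$ and $h^2(F(l))\ne 0$ forces $l\le -3$, and these ranges are disjoint'' is not a consequence of the spectrum. The cases that actually require work are exactly the ones your argument cannot reach: for $t\ge 1$ one has $\chi(F(t))>0$ and $h^2(F(t))=h^3(F(t))=0$, hence $h^0(F(t))\ne 0$, so natural cohomology forces the nontrivial vanishing $h^1(F(t))=0$; dually, for $t\le -5$ one has $h^3(F(t))=h^0(F^{\ast}(-t-4))\ne 0$, so one must prove $h^2(F(t))=0$. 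Neither vanishing follows from the spectrum, from stability, or from the instanton conditions alone, and these are precisely the twists where your plan breaks down.

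The paper closes these cases by a different mechanism: it invokes Theorem \ref{correspondence1} to place $F(1)$ in an extension $0\to\mathcal{O}_{\mathbb{P}^3}\to F(1)\to E(2)\to 0$ with $E$ a stable rank $2$ reflexive sheaf of Chern classes $(-1,3,3)$, and then reads off $h^1(F(t))=h^1(E(t+1))=0$ for $t\ge 1$ and $h^2(F(t))\le h^2(E(t+1))=h^1(E(-t-4))=0$ for $t\le -6$ from Chang's cohomology table, using $E^{\ast}\simeq E(1)$; the remaining boundary twist $t=-5$ is handled by Serre duality applied to $F^{\ast}$, and the easy vanishings come from the linear monad for $F$ rather than from the spectrum. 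Your stability and Serre-duality observations for $h^0$ and $h^3$, and the spectrum computation in the window $-3\le t\le -1$, are fine as far as they go, but to complete the proof you need an input of the Chang-table type (or some substitute argument) for the positive twists of $h^1$ and the very negative twists of $h^2$. You should also justify the generic splitting type $(0,0,0)$ before citing Theorem \ref{spectro}, since the ranges of validity there depend on it.
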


\begin{proof}
Let $F$ be a rank 3 stable instanton bundle on $\mathbb{P}^3$ with charge 2.
By Theorem \eqref{correspondence1}, $F$ satisfy the exact sequence in display 
\eqref{eq90}. \textcolor{black}{On the other hand, by \cite[Theorem 3]{J2006} $F$ is a linear sheaf, and therefore cohomology of a linear monad of the form $$0 \rightarrow \mathcal{O}_{\mathbb{P}^{3}}(-1)^{\oplus 2} \rightarrow \mathcal{O}_{\mathbb{P}^{3}}^{\oplus 7}\rightarrow \mathcal{O}_{\mathbb{P}^{3}}(1)^{\oplus 2} \rightarrow 0.$$
    Tensoring this monad by $\mathcal{O}_{\mathbb{P}^3}(t)$, with $t\in \mathbb{Z}$. From the long exact sequence of cohomology in the display of the monad we have the following:}

If $t\geq 1$, by  \cite[Table 3.13.1]{C1984}, we have that $h^1(F(t))=h^1(E(t+1))=0$ and \textcolor{black}{$h^2(F(t))=h^3(F(t))=0$} since $F$ is a linear sheaf.

If $-2\leq t\leq 0$, we have $h^0(F(t))=0$ because $F$ is stable and $h^2(F(t))=h^3(F(t))=0$ since $F$ is a linear sheaf. 

If $t=-4,-3,$ then $h^0(F(t))=h^1(F(t))=h^3(F(-3))=0$ since $F$ is a linear sheaf
and, by Serre duality, $h^3(F(-4))=h^0(F^{\ast})=0$ because $F$ is stable 
(see \cite[Remark 1.2.6]{OSS1980}).

If $t\leq -5$, we have $h^0(F(t))=h^1(F(t))=0$ since $F$ is a linear sheaf.
By Serre duality, $h^2(F(-5))=h^1(F^{\ast}(1))=0$ since $F^{\ast}$ is also a rank
3 stable instanton bundle of charge 2.

The exact sequence in display \eqref{eq90} induces the exact sequence in cohomology
$$0 \to H^2(F(t)) \to H^2(E(t+1)) \to \cdots $$
\noindent Since $E^{\ast}\simeq E(1)$, we get $h^2(E(t+1))=h^1(E(-t-4))=0$ for
$t\leq -6$ (see \cite[Theorem 3.13]{C1984}). Thus,
$h^2(F(t))=0$ for all $t\leq -6.$

Therefore $F$ has natural cohomology.
 
\end{proof}

 \section{Moduli Space of Rank $3$ bundles with Chern classes $(c_1, c_2, c_3)=(0,2,0)$} \label{moduli}

Now we are in position to study $\mathcal{I}(2)$ the family of rank 3 stable instanton bundles with charge $2$ on $\mathbb{P}^3$. Our goal is to prove that \textcolor{black}{the closure of $\mathcal{I}(2)$ in $\mathcal{B}(0,2,0)$ the moduli space of rank 3 stable vector bundles on $\mathbb{P}^3$ is an irreducible component}. To do so, we will use the correspondence \textcolor{black}{given by Theorem \ref{correspondence1}.}

Let $\mathcal{R}$ be the moduli space of stable rank $2$ reflexives sheaves with Chern classes $(-1,3,3)$ and $\mathcal{U}\to \mathbb{P}^3 \times \mathcal{R}$ its corresponding universal bundle. By \cite[Theorem 3.13]{C1984} $\mathcal{R}$ is an irreducible rational variety of dimension $19$.

Let $p: \mathbb{P}^3\times \mathcal{R} \to \mathcal{R}$ be the projection onto the second coordinate. For every $[E] \in \mathcal{R}$, by the base change theorem for relative Ext-sheaves \cite[Theorem 1.4]{L1983} one has
$$\mathcal{E}xt_p^i(\mathcal{U}(2,0), \mathcal{O}_{\mathbb{P}^3 \times \mathcal{R}})\otimes_E \mathbb{K}([E])\simeq \Ext^i(E(2), \mathcal{O}_{\mathbb{P}^3})\simeq H^{3-i}(E(-2)),\textrm{ for }i=0,1,2,3,$$ hence by \cite[Table 3.13.1]{C1984} one has
$\mathcal{E}xt^i_p(\mathcal{U}(2,0), \mathcal{O}_{\mathbb{P}^3 \times \mathcal{R}})=0,$ for $i=0,2,3.$
Now, by the local-to-global spectral sequence for Ext-sheaves one has $\Ext^1(\mathcal{U}(2,0), \mathcal{O}_{\mathbb{P}^3 \times \mathcal{R}})\simeq H^0(\mathcal{E}xt^1(\mathcal{U}(2,0), \mathcal{O}_{\mathbb{P}^3 \times \mathcal{R}}))$, therefore $\dim \Ext^1(\mathcal{U}(2,0), \mathcal{O}_{\mathbb{P}^3 \times \mathcal{R}})=3$ and $\mathcal{X}=\mathcal{R}\times \Ext^1(\mathcal{U}(2,0), \mathcal{O}_{\mathbb{P}^3 \times \mathcal{R}})$ is an irreducible variety of dimension 22 that parametrizes the pairs $(E,\xi)$, with $\xi \in \Ext^{1}(E(2),{\mathcal O}_{\mathbb{P}^{3}})$.
Consider  $\varphi: \mathcal{X} \to \mathcal{B}(0,2,0)$ the map given by the composition of the correspondence of the Theorem \ref{correspondence1}, and the projection onto the first coordinate. Clearly, $\varphi$ factorizes trough $\tilde{\varphi}:\mathcal{X} \to \mathcal{I}(2)$. Additionally, for every \textcolor{black}{$F \in \mathcal{I}(2)$, one has that $\dim \tilde{\varphi}^{-1}(F) = h^0(F(1))$. But $6=\chi(F(1))=h^0((F(1))-h^1((F(1))$ and since $F$ has natural cohomology by Proposition \ref{natural} it follows that $\dim \tilde{\varphi}^{-1}(F) = h^0(F(1))=6$}. Thus, $\tilde{\varphi}$, by construction is an surjective open morphism of relative dimension $6$, which implies that $\tilde{\varphi}(\mathcal{X}) = \mathcal{I}(2)$, is an irreducible variety of dimension $16$. Thus we have the following result.

\begin{proposition}\label{cordim}The family of stable rank $3$ instanton bundles of charge $2$  is irreducible and has dimension $16$.  
\end{proposition}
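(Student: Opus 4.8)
The plan is to present $\mathcal{I}(2)$ as the image of an explicit irreducible parameter space under a morphism whose fibres have constant dimension, using the bijection of Theorem \ref{correspondence1} to translate the question about rank $3$ instanton bundles into one about rank $2$ reflexive sheaves, where Chang's classification \cite{C1984} is available.

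First I would parametrize side (2) of Theorem \ref{correspondence1}. The sheaves $E$ range over $\mathcal{R}$, the moduli space of stable rank $2$ reflexive sheaves with Chern classes $(-1,3,3)$, which by \cite[Theorem 3.13]{C1984} is irreducible (indeed rational) of dimension $19$. Over $\mathcal{R}$ sits a universal sheaf $\mathcal{U}$, and the second datum $\xi \in \Ext^1(E(2),\mathcal{O}_{\mathbb{P}^3})$ should be encoded by a relative $\mathcal{E}xt$. The key computation is to show that these extension groups have constant dimension: using the base-change theorem for relative $\mathcal{E}xt$-sheaves \cite[Theorem 1.4]{L1983} together with the cohomology table \cite[Table 3.13.1]{C1984}, one identifies $\Ext^i(E(2),\mathcal{O}_{\mathbb{P}^3}) \simeq H^{3-i}(E(-2))$ and finds that only $i=1$ survives, with $\dim \Ext^1(E(2),\mathcal{O}_{\mathbb{P}^3}) = 3$ for every $[E]$. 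Via the local-to-global spectral sequence this makes the total parameter space $\mathcal{X}$ of pairs $(E,\xi)$ an irreducible variety of dimension $19 + 3 = 22$.

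Next I would transport Theorem \ref{correspondence1} into a surjective morphism $\tilde{\varphi}\colon \mathcal{X} \to \mathcal{I}(2)$ sending $(E,\xi)$ to the normalized rank $3$ instanton bundle $F$ that $\xi$ determines. Surjectivity and the identification of fibres are exactly the content of the two directions of Theorem \ref{correspondence1}: the fibre $\tilde{\varphi}^{-1}(F)$ is the set of framings $\sigma \in H^0(F(1))$. The crucial point is that this fibre has constant dimension, and here Proposition \ref{natural} enters --- since $F$ has natural cohomology and $\chi(F(1)) = 6$, one gets $h^0(F(1)) = 6$ uniformly over $\mathcal{I}(2)$. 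Thus $\tilde{\varphi}$ has constant relative dimension $6$, and $\mathcal{I}(2) = \tilde{\varphi}(\mathcal{X})$ is irreducible, being the image of the irreducible $\mathcal{X}$, of dimension $22 - 6 = 16$.

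The main obstacle is the constancy and vanishing of the relative $\mathcal{E}xt$-sheaves over $\mathcal{R}$: one must verify not only that $\dim \Ext^1(E(2),\mathcal{O}_{\mathbb{P}^3})$ is independent of $[E]$ but also that $\Ext^i(E(2),\mathcal{O}_{\mathbb{P}^3}) = 0$ for $i = 0,2,3$, so that $\mathcal{X}$ is genuinely irreducible of the predicted dimension and $\tilde{\varphi}$ is open with equidimensional fibres. This is precisely where the explicit cohomology of rank $2$ reflexive sheaves on $\mathbb{P}^3$ from \cite{C1984} and the base-change machinery do the work; once these are in hand, the dimension count and irreducibility follow formally.
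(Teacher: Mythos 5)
Your proposal follows the paper's own argument essentially step for step: parametrize pairs $(E,\xi)$ by the irreducible $22$-dimensional space $\mathcal{X}$ built from Chang's $19$-dimensional moduli $\mathcal{R}$ and the constant $3$-dimensional $\Ext^1(E(2),\mathcal{O}_{\mathbb{P}^3})$ (via Lange's base change and Chang's cohomology table), then map onto $\mathcal{I}(2)$ with fibres $H^0(F(1))$ of constant dimension $6$ thanks to Proposition \ref{natural}. This is the same decomposition and the same key inputs as the paper, so the proposal is correct and not materially different.
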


Now, our next goal is to prove that $\mathcal{I}(2)$ is in fact the whole moduli space $\mathcal{B}(0,2,0)$. First we have the following.

\begin{proposition} \label{I(2)}
    Every rank $3$ stable vector bundle on $\mathbb{P}^3$ of Chern classes $(0,2,0)$ is an instanton bundle.
\end{proposition}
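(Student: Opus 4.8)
The plan is to reduce the whole statement to a computation of the spectrum and then quote Proposition \ref{instanton1}. Let $F$ be a stable rank $3$ bundle with $(c_1,c_2,c_3)=(0,2,0)$; in particular $F$ is semistable, so by Proposition \ref{instanton1} it is enough to prove that its spectrum $(k_1,k_2)$ (a pair of integers with $k_1\le k_2$, since $c_2=2$) equals $(0,0)$. Because $F$ is locally free we have $s=h^0(\mathcal{E}xt^2(F,\mathcal{O}_{\mathbb{P}^3}))=0$, so Theorem \ref{spectro} expresses $h^1(F(-2))$ and $h^2(F(-2))$ through the spectrum, exactly as in the proof of Proposition \ref{instanton1} (the generic splitting type of a semistable bundle with $c_1=0$ is near–balanced, so $-2$ lies in the admissible range for both formulas).

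First I would pin down $k_1+k_2=0$. Evaluating the Hirzebruch--Riemann--Roch expression \eqref{chi2} at $(c_1,c_2,c_3)=(0,2,0)$ and $m=-2$ gives $\chi(F(-2))=0$. Stability forces $h^0(F(-2))=0$, and by Serre duality $h^3(F(-2))\cong h^0(F^{\ast}(-2))^{\ast}=0$, since $F^{\ast}$ is again stable with $c_1=0$; hence $h^1(F(-2))=h^2(F(-2))$. On the other hand Theorem \ref{spectro}, together with Riemann--Roch on $\mathbb{P}^1$, yields
\[
h^1(F(-2))-h^2(F(-2))=\sum_{i=1}^{2}\bigl(h^0(\mathcal{O}_{\mathbb{P}^1}(k_i-1))-h^1(\mathcal{O}_{\mathbb{P}^1}(k_i-1))\bigr)=\sum_{i=1}^{2}k_i .
\]
Comparing the two computations gives $k_1+k_2=0$, so the spectrum is $(-k,k)$ for some $k\ge 0$.

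The final step is to rule out $k\ge 1$, and this is where the real work lies. A purely numerical analysis is insufficient: one checks that every spectrum $(-k,k)$ produces cohomology consistent with the Chern data (e.g. it reproduces the correct values of $\chi(F(\ell))$ at $\ell=-1,-2,-3$), so no contradiction can be extracted from Riemann--Roch alone. Instead I would invoke the connectedness property of the spectrum of a stable bundle (see \cite{OS1984}, and \cite{A2020} for the higher–rank formulation): the distinct values occurring in the spectrum form a set of consecutive integers. For the pair $\{-k,k\}$ this is possible only when $k=0$; indeed, if $k\ge 1$ then the value $0$ (and, more generally, every integer strictly between $-k$ and $k$) would have to occur, which is impossible with only two entries. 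Hence $k=0$, the spectrum is $(0,0)$, and $F$ is an instanton bundle by Proposition \ref{instanton1}.

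I expect the delicate point to be the legitimate use of connectedness in rank $3$: one must make sure the spectrum theory underlying Theorem \ref{spectro} supplies connectedness for stable rank $3$ bundles, and not only for rank $2$ reflexive sheaves. Should connectedness be available only in a weaker form, the same conclusion can be reached by the standard argument that a gap in the spectrum forces a jump in $h^1(F(\ell))$ incompatible with the semistability of $F$; but quoting the connectedness statement is by far the cleaner route.
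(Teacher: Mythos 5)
Your reduction to the spectrum and the computation $k_1+k_2=0$ are fine and agree with the paper's starting point: since $c_2=2$, the only candidates are $k_F=(0,0)$, which gives an instanton by Proposition \ref{instanton1}, and $k_F=(-1,1)$ (and more unbalanced symmetric pairs, which are easy to discard). The gap is in the decisive step, the exclusion of $(-1,1)$. You dismiss it by invoking ``connectedness of the spectrum'' for stable bundles, but that property is a rank $2$ theorem (Barth--Elencwajg for bundles, Hartshorne for reflexive sheaves); neither Theorem \ref{spectro} nor the properties of \cite{OS1984} and \cite{A2020} used in this paper assert that the spectrum of a stable rank $3$ bundle has no gaps, and you yourself flag this uncertainty. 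The paper's own proof is structured exactly around the fact that $(-1,1)$ cannot be ruled out on general spectral grounds: it is listed as an admissible spectrum and then excluded by a geometric argument. Your fallback sketch also does not work as stated: for the spectrum $(-1,1)$ one computes $h^1(F(-4))=h^1(F(-3))=0$ and $h^1(F(-2))=1$, which is monotone nondecreasing, so a gap in the spectrum produces no ``jump in $h^1(F(\ell))$ incompatible with semistability''; the anomaly created by the gap lives in the interaction between the $h^1$- and $h^2$-ranges ($h^1(F(-2))=h^2(F(-2))=1$) and eliminating it requires real input.

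For comparison, the paper excludes $(-1,1)$ as follows: the spectrum gives $h^2(F(1))=0$, so $\chi(F(1))=6$ forces $h^0(F(1))\geq 6$; two sections then give, via the Hartshorne--Serre correspondence (Proposition \ref{instrat}), an exact sequence $0\to\mathcal{O}_{\mathbb{P}^3}^{\oplus 2}\to F(1)\to I_Y(3)\to 0$ with $Y$ a rational quintic; running the cohomological argument of Theorem \ref{ratinst} (the vanishings come from the degree and genus of $Y$) shows $F$ is an instanton, in particular $h^1(F(-2))=0$, contradicting $h^1(F(-2))=1$ imposed by the spectrum $(-1,1)$. Unless you can supply a proof (or a precise citation) of connectedness of the spectrum valid for stable rank $3$ bundles on $\mathbb{P}^3$, your argument is missing exactly the content that the proposition is about; with such a statement in hand your route would indeed be shorter than the paper's, but as written the key step is unsupported.
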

\begin{proof}
Let $F$ be a rank $3$ stable vector bundle on $\mathbb{P}^3$ of Chern classes $(0,2,0)$ and $k_F=(k_1,k_2)$ its \textcolor{black}{spectrum}. By the properties of \textcolor{black}{spectrum} \cite{OS1984} we have two possibilities $k_F=(0,0)$ or $k_F=(-1,1)$. If the first occurs by Proposition \ref{instanton1} we have the result. Let us prove that $k_F=(-1,1)$ it is not admissible. In fact, by Theorem \ref{spectro} we have
\begin{equation}\label{cohoF}
    h^1(F(-2))=h^2(F(-2))=1, ~ h^2(F(1))=0.
\end{equation}
By Hirzenbruch-Riemann-Roch formula we have $\chi(F(1))=6$, thus $h^0(F(1))\geq 6$ and by Proposition \ref{chernclass}, there exists a locally complete intersection rational quintic $Y\in \mathbb{P}^3$ that corresponds to $F(1)$ via the Hartshorne-Serre correspondence. \textcolor{black}{Therefore we have an exact sequence
\begin{equation}
0\rightarrow {\mathcal O}_{\mathbb{P}^{3}}^{\oplus 2} \rightarrow F(1) \rightarrow I_{Y}(3) \rightarrow 0.
\end{equation}
Note that $h^0(I_{Y}(1))=h^0(F(-1))=0$, where the second equality follows from the stability of $F$. Now, analogously to the proof of Theorem \ref{ratinst} we can conclude that $F$ is an instanton bundle of charge $2$, since all the needed vanishing of the cohomologies follow from degree and the genus of $Y$. Therefore} $h^1(F(-1)) = 2$ which contradicts the dimensions given by \eqref{cohoF}.
\end{proof}

Finally we have our main result.

\begin{theorem}\label{mainresult}
$\mathcal{B}(0,2,0)$ the moduli space of rank $3$ stable vector bundles on $\mathbb{P}^3$ with Chern classes $(0,2,0)$ is irreducible of dimension $16$ and its generic point corresponds to a \textcolor{black}{generalized }t` Hooft instanton bundle.
\end{theorem}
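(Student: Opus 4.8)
The plan is to assemble the pieces already established in the paper rather than argue from scratch. The key observation is that Proposition \ref{I(2)} identifies the two spaces as sets: every stable rank $3$ bundle with Chern classes $(0,2,0)$ is an instanton bundle of charge $2$, so $\mathcal{B}(0,2,0) = \mathcal{I}(2)$ as sets. The real work has already been done in Proposition \ref{cordim}, which shows that $\mathcal{I}(2)$ is irreducible of dimension $16$. So the first step is simply to combine these two statements: since $\mathcal{B}(0,2,0)$ and $\mathcal{I}(2)$ coincide as sets and the latter is irreducible of dimension $16$, the moduli space $\mathcal{B}(0,2,0)$ is irreducible of dimension $16$.

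Second, I would verify that this set-theoretic equality is an equality of moduli spaces (schemes), not merely of points. For this I would invoke smoothness: by Theorem \ref{smooth} the generic point of $\mathcal{I}(2)$ (a stable generalized 't Hooft instanton bundle) is a smooth point of $\mathcal{B}(0,2,0)$, with $\Ext^2(F,F) = 0$. By Corollary \ref{ext}, with $(c_1,c_2,c_3)=(0,2,0)$, one computes $\dim \Ext^1(F,F) - \dim \Ext^2(F,F) = -4\cdot 0 + 12\cdot 2 - 8 = 16$, so at a smooth point where $\Ext^2(F,F)=0$ the tangent space $\Ext^1(F,F)$ has dimension exactly $16$. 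This matches the dimension of $\mathcal{I}(2)$ found in Proposition \ref{cordim}, confirming that $\mathcal{I}(2)$ fills out a full-dimensional, generically smooth irreducible component of $\mathcal{B}(0,2,0)$ and, combined with the set-theoretic equality, that it is all of $\mathcal{B}(0,2,0)$.

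Third, for the last assertion of the theorem I would show that the generic point corresponds to a generalized 't Hooft instanton bundle. By Definition \ref{hooft} this requires $h^0(F(1)) \geq r - 1 = 2$ for the generic $F$. But for every rank $3$ stable instanton bundle of charge $2$ one has $\chi(F(1)) = 6$ (by Hirzebruch-Riemann-Roch, as already computed in the proof of Theorem \ref{correspondence1}), and since $F$ has natural cohomology by Proposition \ref{natural}, the only nonvanishing cohomology group in degree $1$ is $H^0$, forcing $h^0(F(1)) = 6 \geq 2$. Thus every point of $\mathcal{B}(0,2,0)$, in particular the generic one, is a generalized 't Hooft instanton bundle, which is even stronger than required.

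The main obstacle is the passage from a set-theoretic equality $\mathcal{B}(0,2,0) = \mathcal{I}(2)$ to the conclusion that the moduli space is irreducible as a variety, i.e. ruling out that $\mathcal{B}(0,2,0)$ might have additional components meeting $\mathcal{I}(2)$ only along its closure. The generic smoothness from Theorem \ref{smooth} together with the dimension count from Corollary \ref{ext} resolves this: the tangent space at a generic point has dimension equal to $\dim \mathcal{I}(2) = 16$, so $\mathcal{I}(2)$ cannot be properly contained in a larger component, and the set-theoretic equality then upgrades to an equality of (generically reduced, irreducible) moduli spaces.
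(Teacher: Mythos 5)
Your proposal is correct and follows essentially the same route as the paper: combine Proposition \ref{I(2)} with Proposition \ref{cordim} to get $\mathcal{B}(0,2,0)=\mathcal{I}(2)$ irreducible of dimension $16$, then use Theorem \ref{smooth} and Corollary \ref{ext} to see the generic (generalized 't Hooft) point is smooth with $\dim\Ext^1(F,F)=16$. Your extra observations --- that natural cohomology (Proposition \ref{natural}) forces $h^0(F(1))=6$, so \emph{every} bundle in $\mathcal{I}(2)$ is a generalized 't Hooft instanton, and that the tangent-space count rules out a larger component containing $\mathcal{I}(2)$ --- only make explicit points the paper leaves implicit.
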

\begin{proof}
Proposition \ref{I(2)} and Proposition \ref{cordim} imply that $\mathcal{B}(0,2,0)=\mathcal{I}(2)$ is irreducible of dimension $16$. Now, let $F\in \mathcal{I}(2)$ be a generic generalized 't Hooft \textcolor{black}{instanton} bundle, then by Theorem \ref{smooth} $\Ext^2(F,F)=0$ and by Corollary \ref{ext} we have that $\dim \Ext^1(F,F)=16$ which coincides with dimension of $\mathcal{I}(2)$ computed in Theorem \ref{cordim}. This implies that the generic generalized 't Hooft instanton bundle is a smooth point of $\mathcal{B}(0,2,0)$. 

\end{proof}

\bibliographystyle{amsalpha}

\frenchspacing

\begin{thebibliography}{AMMR2020}


\bibitem[A2020]{A2020}C. Almeida, The spectrum of torsion free sheaves on $\mathbb{P}^3$ and applications, Mat. Contemp. {\bf 47} (2020), 1--20. MR4191132

\bibitem[AMMR2020]{AMMR2020}A.~V. Andrade, S. Marchesi\ and\ R.~M. Mir\'{o}-Roig, Irreducibility of the moduli space of orthogonal instanton bundles on $\mathbb{P}^n$, Rev. Mat. Complut. {\bf 33} (2020), no.~1, 271--294. MR4047695

\bibitem[AM2022]{AM2022}V. Antonelli\ and\ F. Malaspina, $H$-instanton bundles on three-dimensional polarized projective varieties, J. Algebra {\bf 598} (2022), 570--607. MR4386636

\bibitem[A2007]{A2007}E. Arrondo, A home-made Hartshorne-Serre correspondence, Rev. Mat. Complut. {\bf 20} (2007), no.~2, 423--443. MR2351117

\bibitem[ADHM1978]{ADHM1978}M.~F. Atiyah\ et al., Construction of instantons, Phys. Lett. A {\bf 65} (1978), no.~3, 185--187. MR0598562

\bibitem[AHS1977]{AHS1977}M.~F. Atiyah, N.~J. Hitchin\ and\ I.~M. Singer, Deformations of instantons, Proc. Nat. Acad. Sci. U.S.A. {\bf 74} (1977), no.~7, 2662--2663. MR0458424

\bibitem[AW1977]{AW1977}M.~F. Atiyah\ and\ R.~S. Ward, Instantons and algebraic geometry, Comm. Math. Phys. {\bf 55} (1977), no.~2, 117--124. MR0494098 

\bibitem[BMT2012]{BMT2012}U. Bruzzo, D.~G. Markushevich\ and\ A.~S. Tikhomirov, Moduli of symplectic instanton vector bundles of higher rank on projective space $\mathbb{P}^3$, Cent. Eur. J. Math. {\bf 10} (2012), no.~4, 1232--1245. MR2925599

\bibitem[CG2020]{CG2020}G. Casnati\ and\ O. Genc, Instanton bundles on two Fano threefolds of index 1, Forum Math. {\bf 32} (2020), no.~5, 1315--1336. MR4145221

\bibitem[C1984]{C1984}M.-C. Chang, Stable rank $2$ reflexive sheaves on $\mathbb{P}^3$ with small $c_2$ and applications, Trans. Amer. Math. Soc. {\bf 284} (1984), no.~1, 57--89. MR0742412

\bibitem[C1988]{C1988}L. Chiantini, Obstructions of 't Hooft curves, Rend. Sem. Mat. Univ. Politec. Torino {\bf 46} (1988), no.~2, 217--231 (1990). MR1084567

\bibitem[CTT2003]{CTT2003}I. Coand\u{a}, A.~S. Tikhomirov\ and\ G. Trautmann, Irreducibility and smoothness of the moduli space of mathematical 5-instantons over $\mathbb{P}_3$, Internat. J. Math. {\bf 14} (2003), no.~1, 1--45. MR1955508

\bibitem[F2014]{F2014}D. Faenzi, Even and odd instanton bundles on Fano threefolds of Picard number one, Manuscripta Math. {\bf 144} (2014), no.~1-2, 199--239. MR3193774

\bibitem[GJ2016]{GJ2016}M. Gargate\ and\ M. Jardim, Singular loci of instanton sheaves on projective space, Internat. J. Math. {\bf 27} (2016), no.~7, 1640006, 18 pp. MR3521591

\bibitem[H1978]{H1978}R. Hartshorne, Stable vector bundles and instantons, Comm. Math. Phys. {\bf 59} (1978), no.~1, 1--15. MR0480510

\bibitem[H1980a]{H19080a}R. Hartshorne, On the classification of algebraic space curves, in {\it Vector bundles and differential equations (Proc. Conf., Nice, 1979)}, 83--112, Progr. Math., 7, Birkh\"{a}user, Boston, MA. MR0589222

\bibitem[H1980b]{H1980b}R. Hartshorne, Stable reflexive sheaves, Math. Ann. {\bf 254} (1980), no.~2, 121--176. MR0597077

\bibitem[J2006]{J2006}M. Jardim, Instanton sheaves on complex projective spaces, Collect. Math. {\bf 57} (2006), no.~1, 69--91. MR2206181

\bibitem[JMT2018]{JMT2018}M. Jardim, D.~G. Markushevich\ and\ A.~S. Tikhomirov, New divisors in the boundary of the instanton moduli space, Mosc. Math. J. {\bf 18} (2018), no.~1, 117--148. MR3778562

\bibitem[JMW2016]{JMW2016}M. Jardim, S. Marchesi\ and\ A. Wissdorf, Moduli of autodual instanton bundles, Bull. Braz. Math. Soc. (N.S.) {\bf 47} (2016), no.~3, 823--843. MR3549070

\bibitem[JV2014]{JV2014}M. Jardim\ and\ M. Verbitsky, Trihyperk\"{a}hler reduction and instanton bundles on $\mathbb{C}\mathbb{P}^3$, Compos. Math. {\bf 150} (2014), no.~11, 1836--1868. MR3279259

\bibitem[KPS2018]{KPS2018}A.~G. Kuznetsov, Y.~G. Prokhorov\ and\ C.~A. Shramov, Hilbert schemes of lines and conics and automorphism groups of Fano threefolds, Jpn. J. Math. {\bf 13} (2018), no.~1, 109--185. MR3776469

\bibitem[L1983]{L1983}H. Lange, Universal families of extensions, J. Algebra {\bf 83} (1983), no.~1, 101--112. MR0710589

\bibitem[L2000]{L2000}Y. A. Lee, The hilbert schemes of curves in $\mathbb{P}^3$ (2000), [Bacharelor thesis, Harvard].

\bibitem[MMPL2020]{MMPL2020}F. Malaspina, S. Marchesi\ and\ J.~F. Pons-Llopis, Instanton bundles on the flag variety $F(0,1,2)$, Ann. Sc. Norm. Super. Pisa Cl. Sci. (5) {\bf 20} (2020), no.~4, 1469--1505. MR4201187

\bibitem[NT1994]{NT1994}T. N\"{u}\ss ler\ and\ G. Trautmann, Multiple Koszul structures on lines and instanton bundles, Internat. J. Math. {\bf 5} (1994), no.~3, 373--388. MR1274124

\bibitem[OS1984]{OS1984}C. Okonek\ and\ H. Spindler, Das Spektrum torsionsfreier Garben. I, Manuscripta Math. {\bf 47} (1984), no.~1-3, 187--228. MR0744319

\bibitem[OSS1980]{OSS1980}C. Okonek, M.~H. Schneider\ and\ H. Spindler, {\it Vector bundles on complex projective spaces}, Progress in Mathematics, 3, Birkh\"{a}user, Boston, MA, 1980. MR0561910

\bibitem[P1977]{P1977}R. Penrose, The twistor programme, Rep. Mathematical Phys. {\bf 12} (1977), no.~1, 65--76. MR0465032

\bibitem[T2012]{T2012}A.~S. Tikhomirov,  Moduli of mathematical instanton vector bundles with odd $c_2$ on projective space, Izv. Ross. Akad. Nauk Ser. Mat. {\bf 76} (2012), no. 5, 143--224. MR3024867

\bibitem[T2013]{T2013}A.~S. Tikhomirov,  Moduli of mathematical instanton vector bundles with even $c_2$ on projective space, Izv. Ross. Akad. Nauk Ser. Mat. {\bf 77} (2013), no. 6, 139--168. MR3184110

\bibitem[V1978]{V1978}J. A. Vorgelaar, Constructing vector bundles from codimension-two subvarieties (1978), [Ph.D. thesis, Leiden].
\end{thebibliography}

\end{document}